\newtheorem{theorem}{Theorem}[section]
\newtheorem{example}[theorem]{Example}
\newtheorem{remark}{\sc Remark}
\newtheorem{lemma}{\sc Lemma}[section]
\newtheorem{corollary}{\sc Corollary}[section]
\newtheorem{definition}{\sc Definition}[section]
\newcommand{\be}{\begin{eqnarray}}
\newcommand{\ee}{\end{eqnarray}}
\newcommand{\Be}{\begin{eqnarray*}}
	\newcommand{\Ee}{\end{eqnarray*}}
\newcommand{\bee}{\begin{equation}}
\newcommand{\eee}{\end{equation}}
\newcommand{\ba}{\begin{array}}
	\newcommand{\ea}{\end{array}}
\newcommand{\bl}{\begin{lemma}}
	\newcommand{\el}{\end{lemma}}
\newcommand{\bd}{\begin{definition}}
	\newcommand{\ed}{\end{definition}}
\newcommand{\bt}{\begin{theorem}}
	\newcommand{\et}{\end{theorem}}
\newcommand{\bp}{\begin{proof}}
	\newcommand{\ep}{\end{proof}}
\newcommand{\bi}{\begin{itemize}}
	\newcommand{\ei}{\end{itemize}}
\newcommand{\br}{\begin{remark}}
	\newcommand{\er}{\end{remark}}
\newcommand{\bc}{\begin{corollary}}
	\newcommand{\ec}{\end{corollary}}
\newcommand{\bex}{\begin{example}}
	\newcommand{\eex}{\end{example}}
\begin{document}
	\date{}
	\title{\textbf{On the flag curvature of homogeneous Finsler space with some special $(\alpha, \beta)$-metrics}}
	\maketitle
	\begin{center}
		\author{\textbf{Gauree Shanker and Kirandeep Kaur}}
	\end{center}
	\begin{center}
		Department of Mathematics and Statistics\\
		School of Basic and Applied Sciences\\
		Central University of Punjab, Bathinda, Punjab-151001, India\\
		Email:   gshankar@cup.ac.in, kiran5iitd@yahoo.com
	\end{center}
	\begin{center}
		\textbf{Abstract}
	\end{center}
	\begin{small}
			In this paper, first we derive an explicit formula for the flag curvature of a homogeneous Finsler space with infinite series $(\alpha, \beta)$-metric and exponential metric. Next, we deduce it  for naturally reductive homogeneous Finsler space with the above mentioned metrics. 
	\end{small}\\
	\textbf{2010 Mathematics Subject Classification:} 22E60, 53C30, 53C60.\\
	\textbf{Keywords and Phrases:} Homogeneous Finsler space, naturally reductive homogeneous Finsler space,  infinite series $(\alpha, \beta)$-metric, exponential metric, flag curvature, bi-invariant Riemannian metric.
	\section{Introduction}
	The main purpose of this paper is to give a formula for  flag curvture of a homogeneous Finsler space with infinite series $(\alpha, \beta)$-metric and also for exponential metric.  Finsler geometry is just the Riemannian geometry without the quadratic restriction, as mentioned by S. S. Chern (\cite{1996 Chern}). The notion of  $(\alpha, \beta)$-metric in Finsler geometry was first introduced by M. Matsumoto in 1972 (\cite{M.Mat1972}). An $(\alpha,\beta)$-metric is a Finsler metric of the form $F= \alpha \phi(s), \ s= \dfrac{\beta}{\alpha}$, where $\alpha= \sqrt{a_{ij}(x)y^iy^j}$ is induced by a Riemannian metric $ \tilde{a}=a_{ij}dx^i \otimes dx^j$ on a connected smooth $n$-manifold $M$ and $\beta= b_i(x) y^i$ is a 1-form on $M$. It is a well known fact that  $(\alpha,\beta)$-metrics are the generalizations of the Randers metric, introduced by G. Randers, in (\cite{Randers}). $(\alpha,\beta)$-metrics have various applications in physics and biology  (\cite{AIM}). 	Consider the $r^{\text{th}}$ series $(\alpha,\beta)$-metric:
	$$ F(\alpha, \beta)= \beta \sum_{r=0}^{r= \infty}\left(\dfrac{\alpha}{\beta} \right)^r .$$
	If $r=1$, then it is a Randers metric.\\
	If $r=\infty$, then $$ F= \dfrac{\beta^2}{\beta - \alpha}.  $$ 
	This metric is called an infinite series $(\alpha,\beta)$-metric. Interesting fact about this metric is that, it is the difference of a Randers metric and a Matsumoto metric, and satisfies Shen's lemma (see lemma \ref{existence of metric}).\\
	Some other important class of  $(\alpha,\beta)$-metrics are Randers metric, Kropina metric, Matsumoto metric and exponential metric. Various properties of $(\alpha,\beta)$-metrics have been studied by so many authors (\cite{M.Mat1992}, \cite{GS2011}, \cite{SB1}, \cite{SB2}, \cite{GKscur}, \cite{GSK},  \cite{SR}).  The study of various types of curvatures of Finsler spaces such as $S$-curvature, mean Berwald curvature, flag curvature always remain the central idea in the Finsler geometry.\\ 
	
	  Finsler geometry has been developing rapidly since last few decades, after its emergence in 1917 (\cite{Finsler}). Finsler geometry has been influenced by group theory. The celebrated Erlangen program of F. Klein, posed in 1872 (\cite{Klein}), greatly influenced the development of geometry. Later, Klein proposed to categorize the geometries by their chacteristic group of transformations. Famous Myers-Steenrod theorem, published in 1939 (\cite{Myers-Steenrod}), extended the scope of applying  Lie theory to all homogeneous Riemannian manifolds.\\
	
	\textbf{Theorem}({\bf Myers-Steenrod}): \textit{Let $ M $ be a connected Riemannian manifold. Then the group of isometries $ I(M) $ of $ M $ admits a differentiable structure such that $ I(M) $ is a Lie transformation group of $ M $}.\\
	
	 S. Deng and Z. Hou (\cite{DH})  have generalized this theorem to the Finslerian case, in 2002. This result opened the door for applying Lie theory to study Finsler geometry. The current important and interesting topics of research  in Finsler geometry are homogeneous Finsler spaces, Finsler spaces with $(\alpha, \beta)$-metrics, symmetric spaces, Rigidity problems and minimal Finsler surfaces etc. Almost all known examples of Einstein manifolds are homogeneous spaces. \\
	 
	 To compute the geometric quantities, specially, curvatures is an interesting problem in homogeneous spaces. In 1976, Milnor (\cite{Milnor}) studied the curvature properties of such spaces by using the formula for the sectional curvature of a left invariant Riemannian metric on a Lie group.\\ 
	 
%	 According to doCarmo (\cite{docarmoDG}),
	 In literature, the term curvature refers to the measurement of how fastly a curve is pulled away from the tangent line at some point in its domain. It should be noted that if a curve is oriented, then curvature remains invariant. 
%	 Straight line has zero curvature.  The curvature of circle is inverse of its radius.
	 The amount that a Riemannian manifold deviates from being Euclidean is defined by the term sectional (or Riemannian) curvature which is the generalization of Gaussian curvature of surfaces. Therefore, sectional curvature of Euclidean space is zero. The notion of sectional curvature was introduced by Riemann (\cite{Riemann}), in 1854.  At that time, Riemann had not given a method to calculate the sectional curvature,  it was done  by Christoffel (\cite{Christoffel}) in the year 1869.\\
	 
	  The flag curvature in Finsler geometry is a generalization of the sectional curvature of Riemannian geometry. The notion of flag curvature was introduced by Berwald in 1926 (\cite{Berwald}). Flag curvature is an important geometric quantity to characterize Finsler spaces. It is very difficult to compute the flag curvature of a general Finsler space as local coordinates are invoved in computation. It can be calculated for homogeneous Finsler spaces without using local coordinates. Many authors (\cite{Milnor}, \cite{N}, \cite{Puttmann}) have studied homogeneous Riemannian spaces. Some authors (\cite{DHIF}, \cite{Invariant}, \cite{EsraMoghQuotient},  \cite{Myers-Steenrod}) have tried to extend some of the results on homogeneous Riemannian spaces to homogeneous Finsler spaces. In the last decade, curvature properties of homogeneous Finsler spaces have been studied by some authors (\cite{DengHuflag2013}, \cite{EsraMoghflag2006}, \cite{MoghKro},  \cite{GKscur}). Classical Bonnet-Myers theorem, Cartan-Hadamard theorem and some other important results indicate that the sign of flag curvature governs the behaviour of geodesic flow and also it reflects the topology of the underlying manifold (\cite{Huang2017}). \\
	  
	  In (\cite{DHIF}), Deng and Hou  give an algebraic description of invariant Finsler metrics on reductive homogeneous spaces and find a necessary and sufficient condition for a homogeneous space to have invariant Finsler metrics. Also, in the same year, they study invariant Randers metrics on homogeneous manifolds and obtain a formula for the flag curvature of such spaces (\cite{Invariant}). But in 2006, with the help of some counter examples, Esrafilian and Salimi Moghaddam (\cite{EsraMoghflag2006})  prove that the formula for flag curvature of the Randers space, given in (\cite{Invariant})  is incorrect and they derive an explicit formula for the flag curvature of invariant Randers metrics on naturally reductive homogeneous manifolds. In (\cite{Moghflag2008}) and  (\cite{MoghflagR2008}), Moghaddam  give the formula for flag curvature of homogeneous spaces with square metric and Randers metric respectively. In 2014, Moghaddam (\cite{Mogh2014}) gives the formula for flag curvature of homogeneous space with Matsumoto metric. Recently, in 2016, Parhizkar and Latifi (\cite{ParLatiFlag}) have given the formula for  flag curvature of invariant $(\alpha, \beta)$-metrics on homogeneous Finsler spaces.
%	  	   The paper consists of  sections arranged as follows:\\
%	 Section 2 includes some preliminaries of Finsler geometry. 
	
	\section{Preliminaries}
		In this section, we give some basic concepts of Finsler geometry that are required for next sections. For symbols and notations , we refer  (\cite{BCS}, \cite{CSBOOK})  and (\cite{Homogeneous Finsler Spaces}).
	\begin{definition}
		Let $M$ be a  smooth manifold of dimension $n, T_pM$  the tangent space at any point $p \in M.$ A real valued  bilinear function   
		$g\colon T_pM\times T_pM\longrightarrow [0,\infty)$
		is called  a Riemannian metric if it is symmetric and positive-definite,i.e., $\forall\;X,Y \in \mathfrak{X}(M), $ 
		\begin{enumerate}
			\item[\bf(i)]$ g(X,Y)=g(Y,X).$
			\item[\bf(ii)] $ g(X,X)\geq0$ and $ g(X,X)=0$ if and only if $X=0.$ 
		\end{enumerate}
		A smooth manifold with a given Riemannian metric is called a \textbf{Riemannian manifold.}
	\end{definition}
	\begin{definition}
		An n-dimensional real vector space $V$ is called a \textbf{Minkowski space}
		if there exists a real valued function $F:V \longrightarrow \mathbb{R}$ satisfying the following conditions: 
		\begin{enumerate}
			\item[\bf(a)]  $F$ is smooth on $V \backslash \{0\},$ 
			\item[\bf(b)] $F(v) \geq 0  \ \ \forall \ v \in V,$
			\item[\bf(c)] $F$ is positively homogeneous, i.e., $ F(\lambda v)= \lambda F(v), \ \ \forall \ \lambda > 0, $
			\item[\bf(d)] For a basis $\{v_1,\ v_2, \,..., \ v_n\}$ of $V$ and $y= y^iv_i \in V$, the Hessian matrix $\left( g_{_{ij}}\right)= \left( \dfrac{1}{2} F^2_{y^i y^j} \right)  $ is positive-definite at every point of $V \backslash \{0\}.$
		\end{enumerate} 
	Here,	$F$ is called a Minkowski norm.
	\end{definition}
	\begin{definition}	
		A connected smooth manifold $M$ is called a \textbf{Finsler space} if there exists a function $F\colon TM \longrightarrow [0, \infty)$ such that $F$ is smooth on the slit tangent bundle $TM \backslash \{0\}$ and the restriction of $F$ to any $T_p(M), \ p \in M$, is a Minkowski norm. In this case, $F$ is called a Finsler metric. 
	\end{definition}
	Let $(M, F)$ be a Finsler space and let $(x^i,y^i)$ be a standard coordinate system in $T_x(M)$. The induced inner product $g_y$ on $T_x(M)$ is given by $g_y(u,v)=g_{ij}(x,y)u^i v^j$, where $u=u^i \dfrac{\partial}{\partial x^i}, \ v=v^i\dfrac{\partial}{\partial x^i} \in T_x(M) $. Also note that $F(x,y)= \sqrt{g_y(y,y)}.$  \\
	The  condition for an $(\alpha,\beta)$-metric to be a Finsler metric is given in following Shen's lemma:
	\begin{lemma}(\cite{CSBOOK}) {\label{existence of metric}}
		Let $F=\alpha \phi(s), \ s=\beta/ \alpha,$ where $\alpha$ is a Riemannian metric and $\beta$ is a 1-form whose length with respect to $\alpha$ is bounded above, i.e., $b:=\lVert \beta \rVert_{\alpha} < b_0,$ where $b_0$ is a positive real number. Then $F$ is a Finsler metric if and only if the function $\phi=\phi(s)$ is a smooth positive function on $\left( -b_0, b_0\right) $ and satisfies the following condition:
		$$ \phi(s)-s\phi'(s)+\left( b^2-s^2\right) \phi''(s)>0, \ \ \lvert s\rvert \leq b < b_0.$$
		
	\end{lemma}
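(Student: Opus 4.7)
The plan is to work at a fixed point $x \in M$ and reduce the question to positive definiteness of the Hessian $g_{ij} = \tfrac12(F^2)_{y^iy^j}$ on the Minkowski space $T_xM$. Fix the Euclidean inner product $a = (a_{ij})$, the covector $b = (b_i)$ with $a$-norm $b < b_0$, and for $y \neq 0$ set $\alpha = \sqrt{a_{ij}y^iy^j}$, $\beta = b_iy^i$, $s = \beta/\alpha$. The Cauchy--Schwarz inequality gives $|s| \le b < b_0$, so $s$ always lies in the open interval where $\phi$ is assumed smooth. A direct differentiation of $F^2 = \alpha^2\phi(s)^2$, using $\alpha_{y^i} = y_i/\alpha$ (with $y_i := a_{ij}y^j$) and $\alpha\, s_{y^i} = b_i - s\, y_i/\alpha$, places the Hessian in the standard decomposed form
\begin{equation*}
g_{ij} = \rho\, a_{ij} + \rho_0\, b_i b_j + \rho_1\bigl(b_i \ell_j + b_j \ell_i\bigr) + \rho_2\, \ell_i \ell_j, \qquad \ell_i := y_i/\alpha,
\end{equation*}
where $\rho = \phi(\phi - s\phi')$ and $\rho_0,\rho_1,\rho_2$ are explicit polynomial expressions in $\phi,\phi',\phi''$ and $s$.

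For the necessity direction the three requirements are read off separately: smoothness of $F$ off the zero section forces smoothness of $\phi$ on $(-b_0,b_0)$; the condition $F(y) > 0$ for $y \ne 0$ forces $\phi > 0$; and testing $g_{ij}u^iu^j > 0$ against a vector $u$ chosen to isolate the combination $\phi - s\phi' + (b^2 - s^2)\phi''$ yields the desired inequality.

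For sufficiency I would introduce an $a$-orthonormal frame $\{e_1,e_2,\dots,e_n\}$ at $y$ with $e_1 = y/\alpha$ and $e_2$ the unit vector in $\mathrm{span}(y,b)$ that is $a$-orthogonal to $y$, so that $b = s\, e_1 + \sqrt{b^2 - s^2}\, e_2$ (if $b$ is parallel to $y$ the second term is absent and the argument simplifies). Written in this frame, $g$ becomes block-diagonal: the $(n-2) \times (n-2)$ block on $\mathrm{span}(e_3,\dots,e_n)$ equals $\rho\, I$, and a $2 \times 2$ block acts on $\mathrm{span}(e_1,e_2)$. The $(n-2)$-block is positive provided $\rho = \phi(\phi - s\phi') > 0$, and one checks that the determinant of the $2 \times 2$ block simplifies to a positive multiple of $\phi^{3}\bigl(\phi - s\phi' + (b^2 - s^2)\phi''\bigr)$ with positive trace. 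Thus positive definiteness of $g$ reduces to positivity of $\phi$, of $\phi - s\phi'$, and of $\phi - s\phi' + (b^2 - s^2)\phi''$; a short argument (differentiating $s \mapsto \phi - s\phi'$ and applying the displayed inequality at the endpoints of $[-b,b]$) shows the middle condition is implied by the other two, so only the hypothesis of the lemma is needed.

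The main obstacle is the algebraic bookkeeping that makes the combination $\phi - s\phi' + (b^2 - s^2)\phi''$ emerge cleanly from the $2 \times 2$ determinant. The raw formulas for $\rho_0,\rho_1,\rho_2$ are cumbersome and the cancellations depend on the specific adapted frame and on the identity $b = s\, e_1 + \sqrt{b^2 - s^2}\, e_2$; once this decomposition is in place, the remainder of the argument is mechanical.
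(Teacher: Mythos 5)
The paper does not prove this lemma at all: it is quoted from Chern and Shen \cite{CSBOOK} as a known criterion, so there is no in-paper argument to compare yours against. Your outline is, in substance, the standard proof from that reference and it is correct. Writing
$g_{ij}=\phi(\phi-s\phi')(a_{ij}-\ell_i\ell_j)+\big[(\phi-s\phi')\ell_i+\phi' b_i\big]\big[(\phi-s\phi')\ell_j+\phi' b_j\big]+\phi\phi''(b_i-s\ell_i)(b_j-s\ell_j)$
and passing to your adapted frame, one finds $g(e_1,e_1)=\phi^2$, $g(e_1,e_2)=\phi\phi'\sqrt{b^2-s^2}$, $g(e_2,e_2)=\phi(\phi-s\phi')+(b^2-s^2)\big((\phi')^2+\phi\phi''\big)$, the remaining block $\phi(\phi-s\phi')\,I$, and the $2\times 2$ determinant exactly $\phi^3\big(\phi-s\phi'+(b^2-s^2)\phi''\big)$ — so the structure you describe is right. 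Two points deserve sharpening. First, in the necessity direction no single test vector isolates the combination $\phi-s\phi'+(b^2-s^2)\phi''$ (note $g(e_2,e_2)$ above carries the extra term $(b^2-s^2)(\phi')^2$); what you actually use is that positive definiteness forces the $2\times 2$ determinant to be positive, and that, together with $\phi>0$, yields the inequality. Second, your ``short argument'' that $\phi-s\phi'>0$ follows from the other two conditions is correct but the decisive step is not the endpoint values alone: with $h(s)=\phi(s)-s\phi'(s)$ one has $h(0)=\phi(0)>0$ and $h(\pm b)>0$ from the displayed inequality, so if $h\le 0$ somewhere in $[-b,b]$ it attains a nonpositive minimum at an interior point $s_m\ne 0$ where $h'(s_m)=-s_m\phi''(s_m)=0$, hence $\phi''(s_m)=0$ and the displayed inequality at $s_m$ reads $h(s_m)>0$, a contradiction. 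With those two clarifications your proposal is a complete proof of the cited lemma.
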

	
\begin{definition}
	Let $ (M, F) $ be a Finsler space. A diffeomorphism $\phi\colon  M \longrightarrow M$ is called an \textbf{isometry} if $F\left( \phi(p),  d\phi_p(X)\right) =F(p,X)$ for any $p \in M$ and $X \in T_p(M).$
\end{definition}

\begin{definition}
	Let $ G $ be a Lie group and $ M $ a smooth manifold. If $ G $ has a smooth action on $ M $, then $ G $ is called a Lie transformation group of $ M $.
\end{definition}
	\begin{definition}
		A connected Finsler space $ (M, F)$  is said to be homogeneous Finsler space if the action of the group of isometries of $(M,F)$, denoted by $ I(M, F)$, is transitive on $M$. 
	\end{definition}
\section{Flag Curvature} 
 Let $G$ be a compact Lie group and $H$ be a closed subgroup of $G$ with Lie algebras $\mathfrak{g}$ and $\mathfrak{h}$ respectively. Let $\tilde{a}_{_{0}}$ be a biinvariant Riemannian metric on $G$. The tangent space of homogeneous space $G/H$ is given by orthogonal complement $\mathfrak{m}$ of $\mathfrak{h}$ in $\mathfrak{g}$ with respect to $\tilde{a}_{_{0}}$. Each invariant metric $\tilde{a}$ on $G/H$ is determined by its restriction to $\mathfrak{m}$. The Ad($H$)-invariant inner product on $\mathfrak{m}$ arising  from $\tilde{a}$ can be extended to an   Ad($H$)-invariant inner product on $\mathfrak{g}$ by taking $\tilde{a}_{_{0}}$ for the components in $\mathfrak{h}$. In this way, $\tilde{a}$ determines a unique left invariant metric on $G$ which we also denote by $\tilde{a}$. Also, the values of $\tilde{a}_{_{0}}$ and $\tilde{a}$ at the identity $e$ of $G$ are inner products on $\mathfrak{g}$ and we denote them by $\left\langle \left\langle \ , \ \right\rangle \right\rangle $ and $\left\langle \  , \ \right\rangle $ respectively. The inner product $\left\langle \ , \ \right\rangle $ determines a positive definite endomorphism $\psi$ of $\mathfrak{g}$ such that $\left\langle Y , Z \right\rangle = \left\langle \left\langle \psi(Y) ,\  Z\right\rangle \right\rangle \ \ \forall \  Y,\ Z \in \mathfrak{g}. $\\
 
 P$\ddot{u}$ttmann (\cite{Puttmann}) has given the following formula for curvature tensor of the invariant metric $\left\langle \ , \ \right\rangle$ on the compact homogeneous space $G/H$:
 \begin{align*}
  \left\langle R(X, Y)Z, W\right\rangle &=\dfrac{1}{2}\bigg\{ \left\langle \left\langle B_{-}(X, Y), [Z, W]\right\rangle \right\rangle + \left\langle \left\langle [X, Y], B_{-}(Z,W) \right\rangle \right\rangle \bigg\} \\
  & \ \ \ \  + \dfrac{1}{4}\bigg\{ \left\langle [X, W], [Y, Z]_\mathfrak{m} \right\rangle -\left\langle [X, Z], [Y,W]_\mathfrak{m}\right\rangle -2 \left\langle [X, Y], [Z, W]_\mathfrak{m} \right\rangle \bigg\} \\
  & \ \ \ \ +\left\langle \left\langle B_{+}(X, W), \psi^{-1}B_{+}(Y, Z)\right\rangle \right\rangle - \left\langle \left\langle B_{+}(X, Z),\psi^{-1}B_{+}(Y, W)\right\rangle \right\rangle, 
 \end{align*}
 where the bilinear maps $B_{+}$ (symmetric) and $B_{-}$
(skew-symmetric) are defined by 
\begin{align*}
B_{+}(X,Y) &= \dfrac{1}{2}\bigg([X,\psi Y] + [Y, \psi X]\bigg),\\
B_{-}(X,Y) &= \dfrac{1}{2}\bigg([\psi X, Y] + [X, \psi Y]\bigg),
 \end{align*}
 and $[ \ , \ ]_{\mathfrak{m}}$ is the projection of $[ \ , \ ]$ to $\mathfrak{m}$.
 \begin{theorem}
 	
 	Let $\mathfrak{g}$ and $\mathfrak{h}$ be Lie algebras of the compact Lie group $G$ and its closed subgroup $H$ respectively. Further, let $\tilde{a}_{0} $ be a biinvariant metric on $G$ and $\tilde{a}$ be an invariant Riemannian metric on homogeneous space $G/H$ such that $ \left\langle Y, Z\right\rangle = \left\langle \left\langle \psi(Y), Z\right\rangle \right\rangle $, where $\psi \colon \mathfrak{g} \longrightarrow \mathfrak{g}$ is a positive definite endomorphism and $Y, Z \ \in \mathfrak{g}$. Also, suppose that $\tilde{X}$ is an invariant vector field on $G/H$ which is parallel with respect to $\tilde{a}$ and $\sqrt{\tilde{a}(\tilde{X}, \tilde{X})} < 1$ and $\tilde{X}_{H} = X.$ Assume that $F= \dfrac{\beta^2}{\beta-\alpha}$ be an infinite series $(\alpha, \beta)$-metric arising from $\tilde{a}$ and $\tilde{X}$ and $(P, Y)$ be a flag in $T_{H}(G/H)$ such that $\{ U, Y\}$ is an orthonormal basis of $P$ with respect to $\left\langle \ , \ \right\rangle $. Then the flag curvature of the flag $(P, Y)$ is given by 
 	
 	\begin{equation}{\label{flagcurinfeq}}
 	\begin{split}
 	K(P, Y) &= \left( \dfrac{\left\langle X, Y\right\rangle -1 }{\left\langle X, Y \right\rangle }\right)^4  \times \\
 	& \ \ \ \dfrac{\bigg[  \left\langle X, R(U, Y)Y \right\rangle \left\langle X, U\right\rangle \left\lbrace \left\langle X, Y\right\rangle^2 +2  \right\rbrace  +
 		\left\langle U, R(U, Y)Y \right\rangle \left\langle X, Y\right\rangle^2 \big\{ \left\langle X, Y\right\rangle -1 \big\}
 		\bigg] }{\bigg[ \left\langle X, Y\right\rangle^2 \big\{ \left\langle X, Y\right\rangle^3 +\left\langle X, Y\right\rangle^2 -5\left\langle X, Y \right\rangle + 3 \big \} + 2 \left\langle X, U\right\rangle ^2\big\{ \left\langle X, Y \right\rangle ^2  + 4 \left\langle X, Y \right\rangle -5 \bigg]   },
 	\end{split}
 	\end{equation}
 	where the values of $ \left\langle X, R(U, Y)Y \right\rangle $ and $ \left\langle U, R(U, Y)Y \right\rangle $ are given by equations (\ref{flaginfeq5}) and (\ref{flaginfeq7}). 
\end{theorem}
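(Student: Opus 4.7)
The plan is to apply the Parhizkar--Latifi formula for the flag curvature of an invariant $(\alpha,\beta)$-metric on a homogeneous space, specialized to $\phi(s) = s^2/(s-1)$ and to the hypothesis that the vector field $\tilde X$ is parallel with respect to $\tilde a$. The parallelism of $\tilde X$ is crucial because $\nabla^{\tilde a}\tilde X = 0$ forces the spray coefficients of $F$ to coincide with those of $\tilde a$; consequently the Finslerian Riemann tensor reduces to the Riemannian one, and only the two pairings $\langle U,R(U,Y)Y\rangle$ and $\langle X,R(U,Y)Y\rangle$ will survive in the numerator of the simplified expression.

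First I would record the auxiliary quantities for $\phi(s)=s^2/(s-1)$, namely
\[
\phi(s) = \frac{s^2}{s-1}, \qquad \phi'(s) = \frac{s(s-2)}{(s-1)^2}, \qquad \phi''(s) = \frac{2}{(s-1)^3},
\]
together with combinations such as $\phi - s\phi' = s^2/(s-1)^2$ and $\phi\phi''+(\phi')^2$ that enter the fundamental tensor $g_Y$ of an $(\alpha,\beta)$-metric. Since $\{U,Y\}$ is orthonormal with respect to $\langle\,,\,\rangle$, evaluating at the flagpole $Y$ gives $\alpha(Y)=1$, $\beta(Y)=\langle X,Y\rangle$, and hence $s = \langle X,Y\rangle$. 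Each of the scalars above then becomes an explicit rational function of $\langle X,Y\rangle$ with a power of $(\langle X,Y\rangle-1)$ in the denominator; this is the structural origin of the overall factor $\bigl((\langle X,Y\rangle-1)/\langle X,Y\rangle\bigr)^4$ appearing in (\ref{flagcurinfeq}).

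Next I would substitute these rational expressions into the Parhizkar--Latifi flag-curvature formula. In the parallel case it expresses $K(P,Y)$ as a ratio whose numerator is a linear combination of $\langle U,R(U,Y)Y\rangle$ and $\langle X,R(U,Y)Y\rangle$ weighted by polynomials in $\langle X,Y\rangle$ and $\langle X,U\rangle$, and whose denominator is proportional to $g_Y(Y,Y)g_Y(U,U)-g_Y(U,Y)^2$ expressed through $\phi,\phi',\phi''$. The two curvature pairings are in turn expanded by feeding $X,Y,U$ into P\"uttmann's formula recorded just above the theorem, yielding the identities that will be labelled (\ref{flaginfeq5}) and (\ref{flaginfeq7}); for the present simplification they may be treated as black boxes.

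The main obstacle is the algebraic simplification after substitution: clearing the $(\langle X,Y\rangle-1)^k$ denominators, extracting the common factor $\bigl((\langle X,Y\rangle-1)/\langle X,Y\rangle\bigr)^4$, collapsing the numerator into the stated combination with the polynomial weights $\{\langle X,Y\rangle^2+2\}$ and $\{\langle X,Y\rangle-1\}$, and recognizing the cubic $\langle X,Y\rangle^3+\langle X,Y\rangle^2-5\langle X,Y\rangle+3$ together with its $\langle X,U\rangle^2$-correction in the denominator. This is a lengthy but essentially mechanical manipulation whose only genuine difficulty is careful bookkeeping of the cancellations among the many powers of $(\langle X,Y\rangle-1)$.
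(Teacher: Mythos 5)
Your proposal is correct in substance and runs structurally parallel to the paper's own proof: both rest on the observation that parallelism of $\tilde X$ makes $F$ Berwaldian, so the Finslerian curvature tensor reduces to the Riemannian one; both evaluate everything at the flagpole, where orthonormality of $\{U,Y\}$ gives $\alpha(Y)=1$ and $s=\langle X,Y\rangle$; both discard the $\langle Y,R(U,Y)Y\rangle$ term (equation (\ref{flaginfeq6})); and both feed P\"uttmann's formula into the ratio $K=g_{_{Y}}(U,R(U,Y)Y)/\bigl(g_{_{Y}}(Y,Y)g_{_{Y}}(U,U)-g^2_{_{Y}}(Y,U)\bigr)$. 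The one genuine difference is how the fundamental tensor is obtained: you specialize the general Parhizkar--Latifi formula for invariant $(\alpha,\beta)$-metrics (\cite{ParLatiFlag}) to $\phi(s)=s^{2}/(s-1)$, using $\phi'(s)=s(s-2)/(s-1)^{2}$ and $\phi''(s)=2/(s-1)^{3}$ (all of which you compute correctly), whereas the paper writes $F(Y)=\langle X,Y\rangle^{2}/\bigl(\langle X,Y\rangle-\sqrt{\langle Y,Y\rangle}\bigr)$ via Lemma 3.1 of (\cite{GKscur}) and differentiates $F^{2}(Y+sU+tV)$ directly to obtain (\ref{flaginfeq1})--(\ref{flaginfeq2}). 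Your route buys a shorter and less error-prone derivation at the price of relying on the correctness of the cited general formula; the paper's route is self-contained but forces the long explicit expansion of (\ref{flaginfeq1}). Either way, the remaining work is exactly the bookkeeping of powers of $\langle X,Y\rangle-1$ that produces the prefactor $\bigl((\langle X,Y\rangle-1)/\langle X,Y\rangle\bigr)^{4}$ and the stated polynomial weights, which you correctly identify as the only real labor. One small caution: the vanishing of $\langle Y,R(U,Y)Y\rangle$ follows from the skew-symmetry of the Riemannian curvature tensor in its last two arguments, not from the parallelism of $\tilde X$ as your wording might suggest.
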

\begin{proof}
	Since $\tilde{X}$ is parallel with respect to $\tilde{a}$, so $\beta$ is parallel with respect
	to $\tilde{a}$. Therefore $F$ is a Berwarld metric, i.e., the Chern connection
	of F coincide with the Riemannian connection of $\tilde{a}$ . Thus the Finsler metric F has the same curvature tensor as that of  the Riemannian metric $\tilde{a}$ and  we denote it by $ R$. Using lemma 3.1 of (\cite{GKscur}), we can write 
	 $$ F\left( Y\right) =  \dfrac{\left\langle  X, Y\right\rangle^2}{\left\langle  X, Y\right\rangle - \sqrt{ \left\langle Y , Y\right\rangle } }. $$
	 Also, we know that
	 $$ g_{_{Y}}(U, V)= \dfrac{1}{2}\dfrac{\partial^2}{\partial s \partial t}F^2 (Y+sU + tV)\bigg|_{s=t=0}.  $$
	After  some calculations, we get
	\begin{equation}{\label{flaginfeq1}}
	\begin{split}
	 g_{_{Y}}(U, V)&= \dfrac{\left\langle X, Y\right\rangle^2}{\left( \left\langle  X, Y\right\rangle - \sqrt{ \left\langle Y , Y\right\rangle } \right) ^4 } \bigg[ \left\langle X, Y\right\rangle^2 \left\langle X, V\right\rangle \left\langle X, U \right\rangle 
	  -4 \left\langle Y, Y\right\rangle^ {3/2} \left\langle X, V \right\rangle \left\langle X, U\right\rangle \\
	  &\ \ \ \ \ \ + 6\left\langle Y, Y\right\rangle\left\langle X, V\right\rangle \left\langle X, U\right\rangle + \dfrac{\left\langle X, Y \right\rangle^2 \left\langle X, V \right\rangle \left\langle U, Y\right\rangle }{\sqrt{\left\langle Y, Y\right\rangle}} -4 \left\langle X, Y \right\rangle  \left\langle X, V\right\rangle \left\langle U, Y\right\rangle   \\
	  & \ \ \ \ \ \ - \dfrac{\left\langle X, Y\right\rangle^3 \left\langle U, Y \right\rangle \left\langle V, Y\right\rangle}{\left\langle Y, Y\right\rangle ^{3/2}} + \dfrac{\left\langle X, Y\right\rangle ^3 \left\langle U, V\right\rangle }{\sqrt{\left\langle Y, Y\right\rangle}} +  \dfrac{ 4 \left\langle X, Y\right\rangle ^2 \left\langle U, Y\right\rangle  \left\langle V, Y\right\rangle  }{\left\langle Y, Y\right\rangle } \\
	  & \ \ \ \ \ \ - \left\langle X, Y \right\rangle^2 \left\langle U, V\right\rangle + \dfrac{\left\langle X, Y \right\rangle^2 \left\langle X, U\right\rangle \left\langle V, Y \right\rangle}{\sqrt{\left\langle Y, Y\right\rangle}}  -4 \left\langle X, Y \right\rangle \left\langle X, U \right\rangle \left\langle V, Y \right\rangle \bigg]. 
	   \end{split}
	\end{equation}
	Since $ \{ U, Y\}$ is an orthonormal basis with respect to $ \left\langle \ , \ \right\rangle $, therefore  equation (\ref{flaginfeq1}) reduces to the following equation:
	\begin{equation}{\label{flaginfeq2}}
	\begin{split}
	g_{_{Y}}(U, V)&= \dfrac{\left\langle X, Y\right\rangle^2}{\left( \left\langle  X, Y\right\rangle - 1 \right) ^4 } \bigg[ \left\langle X, Y\right\rangle^2 \left\langle X, U\right\rangle \big\{\left\langle X, V\right\rangle + \left\langle V, Y\right\rangle  \big\}  + 2 \left\langle X, V\right\rangle \left\langle X, U \right\rangle \\
	& \ \ \ \ \ \ -4 \left\langle X, Y \right\rangle \left\langle X, U\right\rangle \left\langle V, Y\right\rangle  + \left\langle X, Y\right\rangle ^2 \left\langle U, V \right\rangle \big\{ \left\langle X, Y\right\rangle - 1\big\} \bigg].
	\end{split}
	\end{equation}
	From equation (\ref{flaginfeq2}), we  deduce following three equations:
	\begin{equation*}
	\begin{split}
	g_{_{Y}}(Y, Y) &= \dfrac{\left\langle X, Y\right\rangle^4}{\big( \left\langle  X, Y\right\rangle - 1 \big) ^4 } \bigg[ \left\langle X, Y\right\rangle^2 + 2 \left\langle X, Y\right\rangle -3 \bigg],\\
	g_{_{Y}}(Y, U) &= \dfrac{\left\langle X, Y\right\rangle^3}{\big( \left\langle  X, Y\right\rangle - 1 \big) ^4 } \bigg[ \left\langle X, Y\right\rangle^2 \left\langle X, U\right\rangle + \left\langle X, Y\right\rangle   \left\langle X, U \right\rangle -2 \left\langle X, U\right\rangle \bigg],\\
	g_{_{Y}}(U, U) &= \dfrac{\left\langle X, Y\right\rangle^2}{\big( \left\langle  X, Y\right\rangle - 1 \big) ^4 } \bigg[ \left\langle X , U\right\rangle ^2 \left\langle X, Y\right\rangle^2 + 2 \left\langle X, U\right\rangle^2 - \left\langle X, Y\right\rangle^2 + \left\langle X, Y\right\rangle^3 \bigg].   
	\end{split}
	\end{equation*} 
	Therefore,
	\begin{equation}{\label{flaginfeq3}}
	\begin{split}
	g_{_{Y}}(Y, Y) g_{_{Y}}(U, U)- g^2_{_{Y}}(Y, U) = \dfrac{\left\langle X, Y\right\rangle^6}{\big( \left\langle  X, Y\right\rangle - 1 \big) ^8 } \bigg[ \left\langle X, Y\right\rangle^5 +\left\langle X, Y\right\rangle^4 -5\left\langle X, Y \right\rangle^3 + 3\left\langle X, Y\right\rangle^2 \\
	 \ \ \ \ \ \ + 2 \left\langle X, Y \right\rangle ^2 \left\langle X, U\right\rangle ^2 + 8 \left\langle X, Y \right\rangle \left\langle X, U\right\rangle ^2 -10 \left\langle X, U\right\rangle ^2 \bigg].   
	 \end{split}  
	\end{equation}
	Also, 
	\begin{equation}{\label{flaginfeq4}}
	\begin{split}
	g_{_{Y}}(U, R(U, Y)Y)  = \dfrac{\left\langle X, Y\right\rangle^2}{\big( \left\langle  X, Y\right\rangle - 1 \big) ^4 } \bigg[ & \left\langle X, R(U, Y)Y \right\rangle \left\langle X, U\right\rangle \left\lbrace \left\langle X, Y\right\rangle^2 +2  \right\rbrace  \\
	& \left\langle Y, R(U, Y)Y \right\rangle \left\langle X, Y\right\rangle \left\langle X, U\right\rangle \Big\{ \left\langle X, Y\right\rangle -4 \Big\} \\
	& \left\langle U, R(U, Y)Y \right\rangle \left\langle X, Y\right\rangle^2 \Big\{ \left\langle X, Y\right\rangle -1 \Big\}
	  \bigg].   
	\end{split}  
	\end{equation}
   Using P$\ddot{u}$ttmann's formula, we have
  \begin{equation}{\label{flaginfeq5}}
  \begin{split}
  \left\langle X, R(U, Y)Y \right\rangle= & \dfrac{1}{4}\bigg( \left\langle \left\langle \left[ \psi U, Y \right] + \left[ U, \psi Y\right]  , \left[ Y, X\right]  \right\rangle \right\rangle + \left\langle \left\langle \left[ U, Y\right], \left[\psi Y, X \right] + \left[ Y, \psi X \right]  \right\rangle \right\rangle \bigg)\\
   &+ \dfrac{3}{4}\left\langle \left[ Y, U\right], \left[Y, X \right]_{\mathfrak{m} } \right\rangle \\
   	&+ \dfrac{1}{2}\left\langle \left\langle \left[ U, \psi X\right] + \left[ X, \psi U\right], \psi ^{-1} \left[ Y, \psi Y\right]   \right\rangle \right\rangle \\
   	&- \dfrac{1}{4} \left\langle \left\langle \left[ U, \psi Y \right]+ \left[ Y, \psi U\right], \psi ^{-1} \big( \left[ Y, \psi X\right]+ \left[ X, \psi Y\right]   \big)   \right\rangle \right\rangle, 
   	\end{split}
  \end{equation}
  
  \begin{equation}{\label{flaginfeq6}}
  \left\langle Y, R(U, Y)Y \right\rangle =0,
  \end{equation}
  \begin{equation}{\label{flaginfeq7}}
   \begin{split}
  \left\langle U, R(U, Y)Y \right\rangle= & \dfrac{1}{2}\bigg( \left\langle \left\langle \left[ \psi U, Y \right] + \left[ U, \psi Y\right]  , \left[ Y, U\right]  \right\rangle \right\rangle \bigg)\\
  &+ \dfrac{3}{4}\left\langle \left[ Y, U\right], \left[Y, U \right]_{\mathfrak{m} } \right\rangle \\
  &+ \left\langle \left\langle \left[ U, \psi U\right] , \psi ^{-1} \left[ Y, \psi Y\right]   \right\rangle \right\rangle \\
  &- \dfrac{1}{4} \left\langle \left\langle \left[ U, \psi Y \right]+ \left[ Y, \psi U\right], \psi ^{-1} \big( \left[ Y, \psi U\right]+ \left[ U, \psi Y\right]   \big)   \right\rangle \right\rangle. 
  \end{split}
  \end{equation}
  The flag curvature is given by
  $$ K(P, Y)= \dfrac{g_{_{Y}}(U, R(U, Y)Y)}{	g_{_{Y}}(Y, Y) g_{_{Y}}(U, U)- g^2_{_{Y}}(Y, U)}.$$
 Finally, using equations (\ref{flaginfeq3}) to  (\ref{flaginfeq7}) in the above formula, we get equation (\ref{flagcurinfeq}).
\end{proof}
Next, we derive formula for flag curvature of homogeneous Finsler space with exponential metric.
\begin{theorem}
Let $\mathfrak{g}$ and $\mathfrak{h}$ be Lie algebras of the compact Lie group $G$ and its closed subgroup $H$ respectively. Further, let $\tilde{a}_{0} $ be a biinvariant metric on $G$ and $\tilde{a}$ be an invariant Riemannian metric on homogeneous space $G/H$ such that $ \left\langle Y, Z\right\rangle = \left\langle \left\langle \psi(Y), Z\right\rangle \right\rangle $, where $\psi \colon \mathfrak{g} \longrightarrow \mathfrak{g}$ is a positive definite endomorphism and $Y, Z \ \in \mathfrak{g}$. Also, suppose that $\tilde{X}$ is an invariant vector field on $G/H$ which is parallel with respect to $\tilde{a}$ and $\sqrt{\tilde{a}(\tilde{X}, \tilde{X})} < 1$ and $\tilde{X}_{H} = X.$ Assume that $F= \alpha  e^{\beta/\alpha}$ be the exponential $(\alpha, \beta)$-metric arising from $\tilde{a}$ and $\tilde{X}$ and $(P, Y)$ be a flag in $T_{H}(G/H)$ such that $\{ U, Y\}$ is an orthonormal basis of $P$ with respect to $\left\langle \ , \ \right\rangle $. Then the flag curvature of the flag $(P, Y)$ is given by 
	\begin{equation}{\label{flagcurexpeq}}
	\begin{split}
	K(P, Y) = \dfrac{ \bigg[  2 \left\langle X, R(U, Y)Y\right\rangle \left\langle X, U \right\rangle + \left\langle U, R(U, Y)Y\right\rangle \Big( 1- \left\langle X, Y\right\rangle \Big) \bigg]}{e^{2\left\langle  X, Y\right\rangle}  \bigg[ \Big( 1- \left\langle X, Y\right\rangle \Big) \left( 1 + 4 \left\langle X, Y\right\rangle^2 \right)   + \left\langle X, U\right\rangle^2 \left( 1+ 8 \left\langle X, Y\right\rangle^2  \right)  \bigg]},
	\end{split}
	\end{equation}
	where the values of $ \left\langle X, R(U, Y)Y \right\rangle $ and $ \left\langle U, R(U, Y)Y \right\rangle $ are given by equations (\ref{flagexpeq5}) and (\ref{flagexpeq7}).
\end{theorem}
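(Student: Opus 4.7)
The plan is to follow exactly the same blueprint used for the infinite series metric, adapting each step to the exponential Finsler function $F = \alpha e^{\beta/\alpha}$. First, since $\tilde{X}$ is parallel with respect to $\tilde{a}$, the $1$-form $\beta$ is also $\tilde{a}$-parallel, so $F$ is a Berwald metric and its Chern connection coincides with the Levi-Civita connection of $\tilde{a}$. Consequently the flag-curvature tensor $R$ of $F$ equals the Riemann curvature tensor of $\tilde{a}$, and P\"uttmann's formula recorded above is available for $\langle R(X,Y)Z,W\rangle$. By Lemma 3.1 of (\cite{GKscur}), at the identity coset one has $\alpha(Y) = \sqrt{\langle Y,Y\rangle}$ and $\beta(Y) = \langle X, Y\rangle$, yielding the closed form $F(Y) = \sqrt{\langle Y,Y\rangle}\, \exp\!\left(\langle X,Y\rangle/\sqrt{\langle Y,Y\rangle}\right)$.

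Second, I would compute the fundamental tensor via $g_Y(U,V) = \tfrac{1}{2}\partial_s\partial_t F^2(Y+sU+tV)\big|_{s=t=0}$, carrying out two successive differentiations of $F^2 = \langle \cdot,\cdot\rangle\, \exp(2\langle X,\cdot\rangle/\sqrt{\langle\cdot,\cdot\rangle})$ through the product and chain rules. Once the raw bilinear expression is in hand, I would specialize to the $\langle\,,\,\rangle$-orthonormal pair $\{U,Y\}$ by setting $\langle Y,Y\rangle = \langle U,U\rangle = 1$ and $\langle U,Y\rangle = 0$. Then substituting the pairs $(V,V)=(Y,Y),(Y,U),(U,U)$ gives the three entries of the restricted $2\times 2$ matrix, from which the Gram determinant $g_Y(Y,Y)g_Y(U,U) - g_Y(Y,U)^2$ can be assembled. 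A common factor $e^{2\langle X,Y\rangle}$ should emerge, leaving the bracket that appears in the denominator of (\ref{flagcurexpeq}).

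Third, I would evaluate the numerator $g_Y(U,R(U,Y)Y)$ by plugging $V = R(U,Y)Y$ into $g_Y(U,\cdot)$ viewed as a linear functional of its second argument. This naturally generates the three scalar invariants $\langle X,R(U,Y)Y\rangle$, $\langle Y,R(U,Y)Y\rangle$, and $\langle U,R(U,Y)Y\rangle$. The middle one vanishes by the symmetry $\langle R(U,Y)Y,Y\rangle = 0$, exactly as in (\ref{flaginfeq6}), while the remaining two are given by P\"uttmann's formula in the same fashion as equations (\ref{flaginfeq5}) and (\ref{flaginfeq7}); these will be labelled (\ref{flagexpeq5}) and (\ref{flagexpeq7}). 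Feeding everything into $K(P,Y) = g_Y(U,R(U,Y)Y)/\bigl[g_Y(Y,Y)g_Y(U,U) - g_Y(Y,U)^2\bigr]$ should then assemble (\ref{flagcurexpeq}).

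The principal obstacle is the second step: the twofold differentiation of an exponential of the quotient $\langle X,\cdot\rangle/\sqrt{\langle\cdot,\cdot\rangle}$ spawns many chain-rule terms, and one must keep careful track of how derivatives of the $\sqrt{\langle\cdot,\cdot\rangle}$ denominator interact with derivatives of $\langle X,\cdot\rangle$ in the exponent \emph{before} imposing orthonormality, otherwise premature substitutions discard the information in the $\langle X,U\rangle$ and $\langle X,Y\rangle$ cross-terms. A good consistency check is that after all substitutions the denominator must collapse precisely to the clean polynomial $(1-\langle X,Y\rangle)(1+4\langle X,Y\rangle^2) + \langle X,U\rangle^2(1+8\langle X,Y\rangle^2)$; any stray $\langle X,X\rangle$ dependence would signal an error, since the hypothesis $\sqrt{\tilde{a}(\tilde{X},\tilde{X})} < 1$ is used only to keep $F$ in the admissible range given by Shen's lemma, not as a free parameter in the final curvature formula.
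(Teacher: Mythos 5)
Your proposal follows essentially the same route as the paper: establish the Berwald property so that $R$ is the Riemannian curvature of $\tilde{a}$, write $F(Y)$ in closed form, compute $g_{Y}(U,V)$ by the two-parameter second derivative of $F^{2}$, impose orthonormality of $\{U,Y\}$, form the Gram determinant and $g_{Y}(U,R(U,Y)Y)$ using $\left\langle Y,R(U,Y)Y\right\rangle =0$, and evaluate the surviving curvature terms by P\"uttmann's formula. Your closed form $F(Y)=\sqrt{\left\langle Y,Y\right\rangle }\,e^{\left\langle X,Y\right\rangle /\sqrt{\left\langle Y,Y\right\rangle }}$ is in fact the one consistent with the paper's subsequent computation of $g_{Y}(U,V)$ (the paper's displayed $F(Y)=\left\langle X,Y\right\rangle e^{\left\langle X,Y\right\rangle /\sqrt{\left\langle Y,Y\right\rangle }}$ appears to be a typographical slip), so no correction to your plan is needed.
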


\begin{proof}
	Since $\tilde{X}$ is parallel with respect to $\tilde{a}$, so $\beta$ is parallel with respect
	to $\tilde{a}$. Therefore $F$ is a Berwarld metric, i.e., the Chern connection
	of F coincide with the Riemannian connection of $\tilde{a}$ . Thus the Finsler metric F has the same curvature tensor as that of  the Riemannian metric $\tilde{a}$ and  we denote it by $ R$. Using lemma 3.4 of (\cite{GKscur}), we can write 
$$	F\left( Y\right) = \left\langle  X, Y\right\rangle e^{\left\langle  X, Y\right\rangle / \sqrt{ \left\langle Y , Y\right\rangle }}. $$
	Also, we know that 
	$$g_{_{Y}}(U, V)= \dfrac{1}{2}\dfrac{\partial^2}{\partial s \partial t}F^2 (Y+sU + tV)\bigg|_{s=t=0}.$$
	After  some calculations, we get
	\begin{equation}{\label{flagexpeq1}}
	\begin{split}
	g_{_{Y}}(U, V)= e^{2\left\langle  X, Y\right\rangle / \sqrt{ \left\langle Y , Y\right\rangle }}  \Bigg[ & \left\langle U, V\right\rangle + 2 \left\langle X, U \right\rangle  \left\langle X, V\right\rangle - \dfrac{\left\langle X, Y\right\rangle \left\langle Y, U\right\rangle \left\langle Y, V\right\rangle   }{\left\langle Y, Y\right\rangle^{3/2} } \\
	&   + \dfrac{1}{\sqrt{ \left\langle Y , Y\right\rangle }}\bigg\{ \left\langle X, U \right\rangle \left\langle Y, V\right\rangle + \left\langle X, V \right\rangle \left\langle Y, U\right\rangle - \left\langle X, Y\right\rangle \left\langle U, V \right\rangle \bigg\} \\
	 &  + \dfrac{2\left\langle X, Y\right\rangle }{\left\langle Y, Y \right\rangle } \left\lbrace \dfrac{\left\langle X, Y\right\rangle \left\langle Y, U \right\rangle \left\langle Y, V\right\rangle}{\left\langle Y, Y\right\rangle } - \left\langle Y, U\right\rangle \left\langle X, V\right\rangle - \left\langle X, U \right\rangle \left\langle Y, V\right\rangle  \right\rbrace \Bigg].
	\end{split}
	\end{equation}
	Since $ \{ U, Y\}$ is an orthonormal basis with respect to $ \left\langle \ , \ \right\rangle $, therefore  equation (\ref{flagexpeq1}) reduces to the following equation:
	\begin{equation}{\label{flagexpeq2}}
	\begin{split}
	g_{_{Y}}(U, V)= e^{2\left\langle  X, Y\right\rangle}  \bigg[ &\left\langle U, V \right\rangle + 2 \left\langle X, U\right\rangle \left\langle X,V\right\rangle + \left\langle X, U \right\rangle \left\langle Y, V\right\rangle \\
	&   - \left\langle X, Y\right\rangle \left\langle U, V\right\rangle  - 2 \left\langle X, Y\right\rangle \left\langle X, U\right\rangle \left\langle Y, V \right\rangle \bigg].
	\end{split}
	\end{equation}
	From equation (\ref{flagexpeq2}), we  deduce following three equations:
	\begin{equation*}
	\begin{split}
	g_{_{Y}}(Y, Y) &= e^{2\left\langle  X, Y\right\rangle} \bigg[ 1 + 4 \left\langle X, Y \right\rangle ^2\bigg] ,\\
	g_{_{Y}}(Y, U) &= e^{2\left\langle  X, Y\right\rangle} \left\langle X, U \right\rangle  ,\\
	g_{_{Y}}(U, U) &= e^{2\left\langle  X, Y\right\rangle} \bigg[ 1 + 2 \left\langle X, U \right\rangle ^2 - \left\langle X, Y\right\rangle  \bigg].   
	\end{split}
	\end{equation*} 
	Therefore,
	\begin{equation}{\label{flagexpeq3}}
	\begin{split}
	g_{_{Y}}(Y, Y) g_{_{Y}}(U, U)- g^2_{_{Y}}(Y, U) = e^{4\left\langle  X, Y\right\rangle}  \bigg[ &\Big( 1- \left\langle X, Y\right\rangle \Big) \left( 1 + 4 \left\langle X, Y\right\rangle^2 \right) \\
	&  + \left\langle X, U\right\rangle^2 \left( 1+ 8 \left\langle X, Y\right\rangle^2  \right)  \bigg].
	\end{split}  
	\end{equation}
	Also, 
	\begin{equation}{\label{flagexpeq4}}
	\begin{split}
	g_{_{Y}}(U, R(U, Y)Y)  = e^{2\left\langle  X, Y\right\rangle} \bigg[ & 2 \left\langle X, R(U, Y)Y\right\rangle \left\langle X, U \right\rangle  \\
	& + \left\langle Y, R(U, Y)Y\right\rangle\left\langle X, U \right\rangle \Big( 1-2 \left\langle X, Y\right\rangle \Big) \\
	&  + \left\langle U, R(U, Y)Y\right\rangle \Big( 1- \left\langle X, Y\right\rangle \Big) \bigg].
	\end{split}  
	\end{equation}
	Using P$\ddot{u}$ttmann's formula, we have
	\begin{equation}{\label{flagexpeq5}}
	\begin{split}
	\left\langle X, R(U, Y)Y \right\rangle= & \dfrac{1}{4}\bigg( \left\langle \left\langle \left[ \psi U, Y \right] + \left[ U, \psi Y\right]  , \left[ Y, X\right]  \right\rangle \right\rangle + \left\langle \left\langle \left[ U, Y\right], \left[\psi Y, X \right] + \left[ Y, \psi X \right]  \right\rangle \right\rangle \bigg)\\
	&+ \dfrac{3}{4}\left\langle \left[ Y, U\right], \left[Y, X \right]_{\mathfrak{m} } \right\rangle \\
	&+ \dfrac{1}{2}\left\langle \left\langle \left[ U, \psi X\right] + \left[ X, \psi U\right], \psi ^{-1} \left[ Y, \psi Y\right]   \right\rangle \right\rangle \\
	&- \dfrac{1}{4} \left\langle \left\langle \left[ U, \psi Y \right]+ \left[ Y, \psi U\right], \psi ^{-1} \big( \left[ Y, \psi X\right]+ \left[ X, \psi Y\right]   \big)   \right\rangle \right\rangle, 
	\end{split}
	\end{equation}
	
	\begin{equation}{\label{flagexpeq6}}
	\left\langle Y, R(U, Y)Y \right\rangle =0,
	\end{equation}
	\begin{equation}{\label{flagexpeq7}}
	\begin{split}
	\left\langle U, R(U, Y)Y \right\rangle= & \dfrac{1}{2}\bigg( \left\langle \left\langle \left[ \psi U, Y \right] + \left[ U, \psi Y\right]  , \left[ Y, U\right]  \right\rangle \right\rangle \bigg)\\
	&+ \dfrac{3}{4}\left\langle \left[ Y, U\right], \left[Y, U \right]_{\mathfrak{m} } \right\rangle \\
	&+ \left\langle \left\langle \left[ U, \psi U\right] , \psi ^{-1} \left[ Y, \psi Y\right]   \right\rangle \right\rangle \\
	&- \dfrac{1}{4} \left\langle \left\langle \left[ U, \psi Y \right]+ \left[ Y, \psi U\right], \psi ^{-1} \big( \left[ Y, \psi U\right]+ \left[ U, \psi Y\right]   \big)   \right\rangle \right\rangle, 
	\end{split}
	\end{equation}
	The flag curvature is given by
	$$ K(P, Y)= \dfrac{g_{_{Y}}(U, R(U, Y)Y)}{	g_{_{Y}}(Y, Y) g_{_{Y}}(U, U)- g^2_{_{Y}}(Y, U)}.$$
	Finally, using equations (\ref{flagexpeq3}) to  (\ref{flagexpeq7}) in the above formula, we get equation (\ref{flagcurexpeq}).
\end{proof}
\section{Naturally reductive homogeneous space}
In the literature, there are two different definitions of naturally reductive homogeneous Finsler spaces. In 2004 ,  Deng and Hou (\cite{DHIF}) have given the first definition of naturally reductive homogeneous Finsler spaces. In 2007, Latifi (\cite{Latifi2007geo}) has given another definition  of naturally reductive homogeneous Finsler spaces. In 2010, Deng and Hou (\cite{DengHou2010natred}) have shown that if a homogeneous Finsler space is naturally reductive in the sense of Latifi, then it must be naturally reductive in the sense of Deng and Hou and also is Berwaldian. Parhizkar and Moghaddam (\cite{ParMoghnatred}) prove that both the definitions of naturally reductive homogeneous Finsler spaces are equivalent under the consideration of a mild condition.\\
Next, recall(\cite{KN}, \cite{DHIF}, \cite{Latifi2007geo}) the following definitions for later use.
\begin{definition}
	A homogeneous space $G/H$ with an invariant Riemannian metric $\tilde{a}$ is called naturally reductive if there exists an \text{Ad}$(H)$-invariant decomposition $\mathfrak{g}=\mathfrak{m}+\mathfrak{h}$ such that
	$$ \left\langle  \left[ X, Y\right]_\mathfrak{m} , Z\right\rangle + \left\langle Y, \left[ X, Z\right]_\mathfrak{m}  \right\rangle =0, \ \forall \ X, Y, Z \in \mathfrak{m},  $$ where $\left\langle \ , \ \right\rangle $ is the bilinear form on $\mathfrak{m}$ induced by $\tilde{a}.$ \\
	In particular, if we consider $H= \{e\},$ then $\tilde{a}$ is bi-invariant Riemannian metric on $G$ and the above condition reduces to the following:
	
	$$ \left\langle  \left[ X, Y\right] , Z\right\rangle + \left\langle Y, \left[ X, Z\right]  \right\rangle =0, \ \forall \ X, Y, Z \in \mathfrak{g}.  $$
\end{definition}

\begin{definition} 
	A homogeneous Finsler space $G/H$ with an invariant Finsler metric $F$ is said to be naturally reductive if there exists an invariant Riemannian metric $\tilde{a}$ on $G/H$ such that $\left( G/H, \tilde{a}\right) $ is naturally reductive and the Chern connection of $F$ coincides with the Levi-Civita connection of $\tilde{a}.$ 
\end{definition}
\begin{definition}
	A homogeneous Finsler space $G/H$ with an invariant Finsler metric $F$ is called naturally reductive if there exists an \text{Ad}$(H)$-invariant decomposition $\mathfrak{g}= \mathfrak{m}+ \mathfrak{h}$ such that
	$$ g_{_{Y}}\big( \left[ X, U\right]_{\mathfrak{m}}, V \big)\  + \ g_{_{Y}}\big( U, \left[ X, V \right]_{\mathfrak{m}} \big)\  +\  2\  C_{_{Y}} \big( \left[ X, Y \right]_{\mathfrak{m}}, U, V \big) = 0,$$
	where $\ Y \neq 0, \ X, U, V \in \mathfrak{m}.$
\end{definition}
\begin{theorem}
	Let $\left( G/H , F\right) $ be a homogeneous Finsler space of Berwald type	with infinite series metric $F=\dfrac{\beta^2}{\beta-\alpha}$ defined by an invariant Riemannian metric $\left\langle \ , \ \right\rangle $ and an invariant vector field $\tilde{X}$ such that $\tilde{X}(H)= X.$ Then $\left( G/H , F\right) $ is naturally reductive if and only if the underlying Riemannian space $\left( G/H , \left\langle \ , \ \right\rangle  \right)$ is naturally reductive.
\end{theorem}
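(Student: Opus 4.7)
The plan is to reduce everything to Latifi's formulation of naturally reductive Finsler space (\cite{Latifi2007geo}), which, thanks to the Berwald hypothesis and the equivalence results of (\cite{DengHou2010natred}) and (\cite{ParMoghnatred}), coincides with the Deng--Hou definition in the present setting. In this form, $(G/H, F)$ is naturally reductive iff
\[
g_{Y}([W, U]_{\mathfrak{m}}, V) + g_{Y}(U, [W, V]_{\mathfrak{m}}) + 2\, C_{Y}([W, Y]_{\mathfrak{m}}, U, V) = 0
\]
for all nonzero $Y$ and all $W, U, V \in \mathfrak{m}$.

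For the ``if'' direction I would proceed by direct verification. Since $\tilde{X}$ is $\tilde{a}$-parallel and $\tilde{a}$ is naturally reductive, the Koszul formula collapses to $\nabla^{\tilde{a}}_{W}Y = \tfrac{1}{2}[W, Y]_{\mathfrak{m}}$ at $H$, forcing $[W, X]_{\mathfrak{m}} = 0$ for every $W \in \mathfrak{m}$; this yields $\langle X, [W, U]_{\mathfrak{m}}\rangle = 0$ and $\langle X, [W, Y]_{\mathfrak{m}}\rangle = 0$. Formula (\ref{flaginfeq1}) writes $g_{Y}(U, V)$ as a linear combination, with coefficients depending only on $\langle X, Y\rangle$ and $\langle Y, Y\rangle$, of the five elementary pairings $\langle X, U\rangle\langle X, V\rangle,\ \langle X, U\rangle\langle Y, V\rangle,\ \langle X, V\rangle\langle Y, U\rangle,\ \langle Y, U\rangle\langle Y, V\rangle,\ \langle U, V\rangle.$ Substituting $U \mapsto [W, U]_{\mathfrak{m}}$ and $V \mapsto [W, V]_{\mathfrak{m}}$ and adding the two expressions, the $X$-type pairings vanish by the first identity above, and the $\langle U, V\rangle$-pairing cancels pairwise via the Riemannian naturally reductive condition. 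What remains are terms proportional to $\langle Y, [W, U]_{\mathfrak{m}}\rangle$ and $\langle Y, [W, V]_{\mathfrak{m}}\rangle$, which, by the Riemannian identity, equal $-\langle [W, Y]_{\mathfrak{m}}, U\rangle$ and $-\langle [W, Y]_{\mathfrak{m}}, V\rangle$.

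On the Cartan side I would use $2\, C_{Y}(Z, U, V) = \partial_{t}\, g_{Y + tZ}(U, V)|_{t=0}$ with $Z = [W, Y]_{\mathfrak{m}}$. Because $\langle X, [W, Y]_{\mathfrak{m}}\rangle = 0$ and $\langle Y, [W, Y]_{\mathfrak{m}}\rangle = 0$ (the latter by antisymmetry of $\mathrm{ad}_W$ on $\mathfrak{m}$), the $t$-derivatives of the scalar coefficients of (\ref{flaginfeq1}) all vanish. Only the direct $t$-derivatives of the $\langle Y, U\rangle$ and $\langle Y, V\rangle$ factors survive, and these reproduce precisely the negatives of the residue from the previous paragraph, yielding the Latifi identity.

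For the ``only if'' direction I would invoke the Deng--Hou form of the definition: $(G/H, F)$ naturally reductive supplies an invariant Riemannian metric $\tilde{b}$ on $G/H$ with $(G/H, \tilde{b})$ naturally reductive and $\mathrm{Chern}(F) = \nabla^{\tilde{b}}$. Combined with the Berwald hypothesis $\mathrm{Chern}(F) = \nabla^{\tilde{a}}$ this gives $\nabla^{\tilde{a}} = \nabla^{\tilde{b}}$. Since $\tilde{b}$ is naturally reductive, $\nabla^{\tilde{b}}_{W}Y = \tfrac{1}{2}[W, Y]_{\mathfrak{m}}$ at $H$ for all $W, Y \in \mathfrak{m}$, and the same formula then holds for $\nabla^{\tilde{a}}$. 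Substituting this into the Koszul formula for $\tilde{a}$ yields the cyclic identity $\langle [W, Y]_{\mathfrak{m}}, Z\rangle = \langle [Y, Z]_{\mathfrak{m}}, W\rangle$, which is equivalent to $\langle [W, Y]_{\mathfrak{m}}, Z\rangle + \langle Y, [W, Z]_{\mathfrak{m}}\rangle = 0$, i.e., to the Riemannian naturally reductive condition.

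The main obstacle is expected to be the clean matching, term by term, of the residue from $g_Y([W, U]_{\mathfrak{m}}, V) + g_Y(U, [W, V]_{\mathfrak{m}})$ against $2\, C_{Y}([W, Y]_{\mathfrak{m}}, U, V)$: this matching is where the explicit form of (\ref{flaginfeq1}) enters crucially, and it hinges on the twin vanishings $\langle X, [W, Y]_{\mathfrak{m}}\rangle = \langle Y, [W, Y]_{\mathfrak{m}}\rangle = 0$, without which no such cancellation could occur.
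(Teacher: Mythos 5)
Your argument is correct in substance, but the two halves relate to the paper's proof differently. For the direction ``$(G/H,\left\langle \ , \ \right\rangle)$ naturally reductive $\Rightarrow$ $(G/H,F)$ naturally reductive'' you do essentially what the paper does: establish the vanishings $\left\langle X,[W,U]_{\mathfrak m}\right\rangle=0$ and $\left\langle Y,[W,Y]_{\mathfrak m}\right\rangle=0$, expand $g_{_Y}([Z,U]_{\mathfrak m},V)+g_{_Y}([Z,V]_{\mathfrak m},U)+2C_{_Y}([Z,Y]_{\mathfrak m},U,V)$ via the explicit formula (\ref{flaginfeq1}), and group the survivors into pairs each killed by the Riemannian condition --- exactly the computation the paper carries out in (\ref{natinfeq8})--(\ref{natinfeq10}). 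For the opposite direction you genuinely diverge: the paper never leaves Latifi's definition; it specializes the defining identity to $Z=Y$ so that the Cartan term drops, extracts $\left\langle X,[Y,Z]_{\mathfrak m}\right\rangle=0$ and $\left\langle [Y,Z]_{\mathfrak m},Y\right\rangle=0$, and then reads the Riemannian condition directly off the resulting scalar identity $g_{_Y}([Y,U]_{\mathfrak m},V)=c\left\langle [Y,U]_{\mathfrak m},V\right\rangle$ with $c\neq 0$. You instead pass to the Deng--Hou formulation and argue at the level of connections, via $\nabla^{\tilde a}=\mathrm{Chern}(F)=\nabla^{\tilde b}$ with $\tilde b$ naturally reductive. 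Your route is more conceptual and works verbatim for any Berwald $(\alpha,\beta)$-metric with parallel $\beta$, not just the infinite series one; the price is that it imports the implication ``Latifi $\Rightarrow$ Deng--Hou'' from \cite{DengHou2010natred}, and strictly speaking it only delivers natural reductivity of $\tilde a$ with respect to \emph{some} $\mathrm{Ad}(H)$-invariant complement (the one attached to $\tilde b$), which is all the existential Riemannian definition requires but is formally weaker than the paper's conclusion relative to the fixed $\mathfrak m$. The paper's computational route stays inside the fixed decomposition and, as a by-product, produces the identities (\ref{natinfeq3}) and (\ref{natinfeq6}) that it then reuses in the converse; your route would have to re-derive $\left\langle X,[W,U]_{\mathfrak m}\right\rangle=0$ separately for the other half, which you do correctly from parallelism of $\tilde X$.
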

\begin{proof} Let $Y \neq 0,\  Z \in \mathfrak{m}.$
	From equation (\ref{flaginfeq1}), we can write
	\begin{equation}{\label{natinfeq2}}
	\begin{split}
	g_{_{Y}}(Y, \left[ Y, Z\right]_{\mathfrak{m}} )&= \dfrac{\left\langle X, Y\right\rangle^2}{\left( \left\langle  X, Y\right\rangle - \sqrt{ \left\langle Y , Y\right\rangle } \right) ^4 } \bigg[ \left\langle X, Y\right\rangle^2 \left\langle X, \left[ Y, Z\right]_{\mathfrak{m}}\right\rangle \left\langle X, Y \right\rangle \\
	& \ \ \ -4 \left\langle Y, Y\right\rangle^ {3/2} \left\langle X, \left[ Y, Z\right]_{\mathfrak{m}} \right\rangle \left\langle X, Y\right\rangle 
	+ 6\left\langle Y, Y\right\rangle\left\langle X, \left[ Y, Z\right]_{\mathfrak{m}}\right\rangle \left\langle X, Y\right\rangle\\
	& \ \ \  + \dfrac{\left\langle X, Y \right\rangle^2 \left\langle X, \left[ Y, Z\right]_{\mathfrak{m}} \right\rangle \left\langle Y, Y\right\rangle }{\sqrt{\left\langle Y, Y\right\rangle}} -4 \left\langle X, Y \right\rangle  \left\langle X, \left[ Y, Z\right]_{\mathfrak{m}}\right\rangle \left\langle Y, Y\right\rangle   \\
	& \ \ \  - \dfrac{\left\langle X, Y\right\rangle^3 \left\langle Y, Y_{\mathfrak{m}} \right\rangle \left\langle \left[ Y, Z\right]_{\mathfrak{m}}, Y\right\rangle}{\left\langle Y, Y\right\rangle ^{3/2}} + \dfrac{\left\langle X, Y\right\rangle ^3 \left\langle Y, \left[ Y, Z\right]_{\mathfrak{m}}\right\rangle }{\sqrt{\left\langle Y, Y\right\rangle}}\\
	& \ \ \  +  \dfrac{ 4 \left\langle X, Y\right\rangle ^2 \left\langle Y, Y\right\rangle  \left\langle \left[ Y, Z\right]_{\mathfrak{m}}, Y\right\rangle  }{\left\langle Y, Y\right\rangle }  - \left\langle X,Y \right\rangle^2 \left\langle Y, \left[ Y, Z\right]_{\mathfrak{m}}\right\rangle \\
	& \ \ \ + \dfrac{\left\langle X, Y\right\rangle^2 \left\langle X, Y\right\rangle \left\langle \left[ Y, Z\right]_{\mathfrak{m}}, Y \right\rangle}{\sqrt{\left\langle Y, Y\right\rangle}}  -4 \left\langle X, Y \right\rangle \left\langle X, Y \right\rangle \left\langle \left[ Y, Z\right]_{\mathfrak{m}}, Y \right\rangle \bigg]. 
	\\&
	= \dfrac{\left\langle X, Y\right\rangle^3}{\left( \left\langle  X, Y\right\rangle - \sqrt{ \left\langle Y , Y\right\rangle } \right) ^4 } \times \\
	&\ \ \ \ \Bigg[ \left\langle X, \left[ Y, Z\right]_{\mathfrak{m}}\right\rangle \Big\{\left\langle X, Y\right\rangle^2 -4 \left\langle Y, Y\right\rangle ^{3/2} + \left\langle X, Y\right\rangle \sqrt{\left\langle Y, Y\right\rangle} + 2 \left\langle Y, Y \right\rangle \Big\}\\
	 &\ \ \ \  +\left\langle Y, \left[ Y, Z\right]_{\mathfrak{m}}\right\rangle \left\lbrace  \dfrac{\left\langle X, Y\right\rangle^2 }{\sqrt{\left\langle Y, Y \right\rangle }} - \left\langle X, Y\right\rangle \right\rbrace \Bigg].
	\end{split}
	\end{equation}
	Since $(G/H, F)$ is of Berwald type, therefore Chern connection of $\left( G/H , F\right) $ coincides with the Riemannian connection of  $\left( G/H , \left\langle \ , \ \right\rangle  \right)$.\\
	Thus from equation (\ref{natinfeq2}), we have
	\begin{equation}{\label{natinfeq3}}
	\left\langle X, \left[ Y, Z\right]_{\mathfrak{m}}\right\rangle =0, \ \forall \ Z \in \mathfrak{m}.
	\end{equation}
	Now,	let $\left( G/H , F\right) $ be naturally reductive, i.e.,  in the sense of (\cite{Latifi2007geo}), we have
	$$ g_{_{Y}}\big( \left[ Z, U\right]_{\mathfrak{m}}, V \big) \  + \ g_{_{Y}}\big( \left[ Z, V\right]_{\mathfrak{m}}, U \big) \  + \ 2 \  C_{_{Y}}\big( \left[ Z, Y\right]_{\mathfrak{m}}, U, V\big) =0, \ \forall \ Y \neq 0,\  U,\  V, \ Z \in \mathfrak{m}.$$
	Therefore, we can write
	$$ g_{_{Y}}\big( \left[ Y, U\right]_{\mathfrak{m}}, V \big) \  + \ g_{_{Y}}\big( \left[ Y, V\right]_{\mathfrak{m}}, U \big) \  + \ 2 \  C_{_{Y}}\big( \left[ Y, Y\right]_{\mathfrak{m}}, U, V\big) =0, \ \forall \ Y \neq 0,$$
	i.e.,
	\begin{equation}{\label{natinfeq4}}
	g_{_{Y}}\big( \left[ Y, U\right]_{\mathfrak{m}}, V \big) \  + \ g_{_{Y}}\big( \left[ Y, V\right]_{\mathfrak{m}}, U \big)=0.
	\end{equation}
	From above equation, we have
	\begin{equation}{\label{natinfeq5}}
	g_{_{Y}}\big( \left[ Y,  Z\right]_{\mathfrak{m}}, Y \big) = 0.
	\end{equation}
	From equation (\ref{natinfeq2}), (\ref{natinfeq3}) and (\ref{natinfeq5}), we get
	\begin{equation}{\label{natinfeq6}}
	\left\langle \left[ Y,  Z\right]_{\mathfrak{m}}, Y \right\rangle = 0.
	\end{equation}
	From equation (\ref{flaginfeq1}), we can write
	\begin{equation*}
	\begin{split}
	g_{_{Y}}(\left[ Y, U\right]_{\mathfrak{m}} , V)&= \dfrac{\left\langle X, Y\right\rangle^2}{\left( \left\langle  X, Y\right\rangle - \sqrt{ \left\langle Y , Y\right\rangle } \right) ^4 } \bigg[  \left\langle X, Y\right\rangle^2 \left\langle X, V\right\rangle \left\langle X, \left[ Y, U\right]_{\mathfrak{m}} \right\rangle \\
	& \ \ \ \ \ \  -4 \left\langle Y, Y\right\rangle^ {3/2} \left\langle X, V \right\rangle \left\langle X, \left[ Y, U\right]_{\mathfrak{m}}\right\rangle 
	+ 6\left\langle Y, Y\right\rangle\left\langle X, V\right\rangle \left\langle X, \left[ Y, U\right]_{\mathfrak{m}}\right\rangle \\
	& \ \ \ \ \ \ + \dfrac{\left\langle X, Y \right\rangle^2 \left\langle X, V \right\rangle \left\langle \left[ Y, U\right]_{\mathfrak{m}}, Y\right\rangle }{\sqrt{\left\langle Y, Y\right\rangle}} 
	-4 \left\langle X, Y \right\rangle  \left\langle X, V\right\rangle \left\langle \left[ Y, U\right]_{\mathfrak{m}}, Y\right\rangle \\
	& \ \ \ \ \ \ - \dfrac{\left\langle X, Y\right\rangle^3 \left\langle \left[ Y, U\right]_{\mathfrak{m}}, Y \right\rangle \left\langle V, Y\right\rangle}{\left\langle Y, Y\right\rangle ^{3/2}} + \dfrac{\left\langle X, Y\right\rangle ^3 \left\langle\left[ Y, U\right]_{\mathfrak{m}}, V\right\rangle }{\sqrt{\left\langle Y, Y\right\rangle}} \\
	& \ \ \ \ \ \ +  \dfrac{ 4 \left\langle X, Y\right\rangle ^2 \left\langle \left[ Y, U\right]_{\mathfrak{m}}, Y\right\rangle  \left\langle V, Y\right\rangle  }{\left\langle Y, Y\right\rangle } 
	- \left\langle X, Y \right\rangle^2 \left\langle \left[ Y, U\right]_{\mathfrak{m}}, V\right\rangle \\
	& \ \ \ \ \ \ + \dfrac{\left\langle X, Y \right\rangle^2 \left\langle X, \left[ Y, U\right]_{\mathfrak{m}}\right\rangle \left\langle V, Y \right\rangle}{\sqrt{\left\langle Y, Y\right\rangle}}  -4 \left\langle X, Y \right\rangle \left\langle X, \left[ Y, U\right]_{\mathfrak{m}} \right\rangle \left\langle V, Y \right\rangle \bigg]. 
	\end{split}
	\end{equation*}
	Using equations (\ref{natinfeq3}) and (\ref{natinfeq6}) in above equation, we get
	
	$$ g_{_{Y}}(\left[ Y, U\right]_{\mathfrak{m}} , V) =  \dfrac{\left\langle X, Y\right\rangle^4 \left\langle \left[ Y, U\right]_{\mathfrak{m}}, V\right\rangle }{\left( \left\langle  X, Y\right\rangle - \sqrt{ \left\langle Y , Y\right\rangle } \right) ^3  \sqrt{\left\langle Y, Y\right\rangle }}.  $$
	Similarly, we get
	$$ g_{_{Y}}(\left[ Y, V\right]_{\mathfrak{m}} , U) =  \dfrac{\left\langle X, Y\right\rangle^4 \left\langle \left[ Y, V\right]_{\mathfrak{m}}, U\right\rangle }{\left( \left\langle  X, Y\right\rangle - \sqrt{ \left\langle Y , Y\right\rangle } \right) ^3  \sqrt{\left\langle Y, Y\right\rangle }}.  $$
	Adding the above two equations, we get
	\begin{equation}{\label{natinfeq7}}
	g_{_{Y}}(\left[ Y, U\right]_{\mathfrak{m}} , V) +  g_{_{Y}}(\left[ Y, V\right]_{\mathfrak{m}} , U) = \dfrac{\left\langle X, Y\right\rangle^4  }{\left( \left\langle  X, Y\right\rangle - \sqrt{ \left\langle Y , Y\right\rangle } \right) ^3  \sqrt{\left\langle Y, Y\right\rangle }} \Big\{ \left\langle \left[ Y, U\right]_{\mathfrak{m}}, V\right\rangle + \left\langle \left[ Y, V\right]_{\mathfrak{m}}, U\right\rangle \Big\} .
	\end{equation}
	From equations (\ref{natinfeq4}) and (\ref{natinfeq7}), we get
	$$ \left\langle \left[ Y, U\right]_{\mathfrak{m}}, V\right\rangle + \left\langle \left[ Y, V\right]_{\mathfrak{m}}, U\right\rangle =0.$$
	Hence, $ (G/H, \left\langle \ , \ \right\rangle )$ is naturally reductive.\\
	
	Conversely, let $ (G/H, \left\langle \ , \ \right\rangle )$ be naturally reductive.
	Using equation (\ref{flaginfeq1}), we can write
	\begin{equation}{\label{natinfeq8}}
	\begin{split}
	g_{_{Y}}(\left[ Z, U\right]_{\mathfrak{m}} , V) = & \dfrac{\left\langle X, Y\right\rangle^3}{\left( \left\langle  X, Y\right\rangle - \sqrt{ \left\langle Y , Y\right\rangle } \right) ^4}  \Bigg[ \left\langle \left[ Z, U\right]_{\mathfrak{m}}, V\right\rangle \left\lbrace \dfrac{\left\langle X, Y\right\rangle^2 }{\sqrt{\left\langle Y, Y\right\rangle }} - \left\langle X, Y\right\rangle \right\rbrace \\
	& \hspace{2.5cm} + \left\langle \left[ Z, U\right]_{\mathfrak{m}}, Y\right\rangle \Bigg\{ \dfrac{\left\langle X, Y\right\rangle \left\langle X, V\right\rangle }{\sqrt{\left\langle Y, Y\right\rangle }} - 4 \left\langle X, V\right\rangle \\
	& \hspace{3cm} - \dfrac{\left\langle X, Y\right\rangle^2 \left\langle V, Y\right\rangle  }{\left\langle Y, Y\right\rangle^{3/2}} + \dfrac{4 \left\langle X, Y\right\rangle \left\langle V, Y\right\rangle}{\left\langle Y, Y\right\rangle } \Bigg\} \Bigg].
	\end{split}
	\end{equation}
	Similarly, we can write
	\begin{equation}{\label{natinfeq9}}
	\begin{split}
	g_{_{Y}}(\left[ Z, V\right]_{\mathfrak{m}} , U) = & \dfrac{\left\langle X, Y\right\rangle^3}{\left( \left\langle  X, Y\right\rangle - \sqrt{ \left\langle Y , Y\right\rangle } \right) ^4}  \Bigg[ \left\langle \left[ Z, V\right]_{\mathfrak{m}}, U\right\rangle \left\lbrace \dfrac{\left\langle X, Y\right\rangle^2 }{\sqrt{\left\langle Y, Y\right\rangle }} - \left\langle X, Y\right\rangle \right\rbrace \\
	& \hspace{2.5cm} + \left\langle \left[ Z, V\right]_{\mathfrak{m}}, Y\right\rangle \Bigg\{ \dfrac{\left\langle X, Y\right\rangle \left\langle X, U\right\rangle }{\sqrt{\left\langle Y, Y\right\rangle }} - 4 \left\langle X, U\right\rangle \\
	& \hspace{3cm} - \dfrac{\left\langle X, Y\right\rangle^2 \left\langle U, Y\right\rangle  }{\left\langle Y, Y\right\rangle^{3/2}} + \dfrac{4 \left\langle X, Y\right\rangle \left\langle U, Y\right\rangle}{\left\langle Y, Y\right\rangle } \Bigg\} \Bigg].
	\end{split}
	\end{equation}
	Next, Cartan tensor is given by
	$$ C_{_{Y}}\left( Z, U, V\right)= \dfrac{1}{2}\dfrac{d}{dt}\Big[ g_{_{Y + t V}}\left( Z, U \right)\Big] \bigg|_{t=0}. $$
	After computations, we get
	\begin{equation}{\label{natinfeq10}}
	\begin{split}
	2 C_{_{Y}}\left( \left[ Z, Y\right]_{\mathfrak{m}} , U, V\right) &=   \dfrac{\left\langle X, Y\right\rangle^3}{\left( \left\langle  X, Y\right\rangle - \sqrt{ \left\langle Y , Y\right\rangle } \right) ^4} \times \\
	& \Bigg[ \left\langle \left[ Z, Y\right]_{\mathfrak{m}}, U \right\rangle \Bigg\{ \dfrac{\left\langle X, Y\right\rangle \left\langle X, V\right\rangle }{\sqrt{\left\langle Y, Y\right\rangle }} - 4 \left\langle X, V\right\rangle  - \dfrac{\left\langle X, Y\right\rangle^2 \left\langle V, Y\right\rangle  }{\left\langle Y, Y\right\rangle^{3/2}} + \dfrac{4 \left\langle X, Y\right\rangle \left\langle V, Y\right\rangle}{\left\langle Y, Y\right\rangle } \Bigg\}\\
	&+ \left\langle \left[ Z, Y\right]_{\mathfrak{m}}, V\right\rangle \Bigg\{ \dfrac{\left\langle X, Y\right\rangle \left\langle X, U\right\rangle }{\sqrt{\left\langle Y, Y\right\rangle }} - 4 \left\langle X, U\right\rangle  - \dfrac{\left\langle X, Y\right\rangle^2 \left\langle U, Y\right\rangle  }{\left\langle Y, Y\right\rangle^{3/2}} + \dfrac{4 \left\langle X, Y\right\rangle \left\langle U, Y\right\rangle}{\left\langle Y, Y\right\rangle } \Bigg\} \Bigg].
	\end{split}
	\end{equation}
	Adding equations (\ref{natinfeq8}), (\ref{natinfeq9}) and (\ref{natinfeq10}), we get 
	\begin{equation*}
	\begin{split}
	& g_{_{Y}}(\left[ Z, U\right]_{\mathfrak{m}} , V) + g_{_{Y}}(\left[ Z, V\right]_{\mathfrak{m}} , U) + 2 C_{_{Y}}\left( \left[ Z, Y\right]_{\mathfrak{m}} , U, V\right)\\
	  & = \dfrac{\left\langle X, Y\right\rangle^3}{\left( \left\langle  X, Y\right\rangle - \sqrt{ \left\langle Y , Y\right\rangle } \right) ^4} \times \\
	& \Bigg[ \Bigg\{\left\langle \left[ Z, U\right]_{\mathfrak{m}}, V\right\rangle + \left\langle \left[ Z, V\right]_{\mathfrak{m}}, U\right\rangle \Bigg\} \Bigg\{\dfrac{\left\langle X, Y\right\rangle^2 }{\sqrt{\left\langle Y, Y\right\rangle }} - \left\langle X, Y\right\rangle  \Bigg\} \\
	& + \Bigg\{\left\langle \left[ Z, U\right]_{\mathfrak{m}}, Y\right\rangle + \left\langle \left[ Z, Y\right]_{\mathfrak{m}}, U\right\rangle \Bigg\} \Bigg\{ \dfrac{\left\langle X, Y\right\rangle \left\langle X, V\right\rangle }{\sqrt{\left\langle Y, Y\right\rangle }} - 4 \left\langle X, V\right\rangle  - \dfrac{\left\langle X, Y\right\rangle^2 \left\langle V, Y\right\rangle  }{\left\langle Y, Y\right\rangle^{3/2}} + \dfrac{4 \left\langle X, Y\right\rangle \left\langle V, Y\right\rangle}{\left\langle Y, Y\right\rangle } \Bigg\}\\
	& + \Bigg\{\left\langle \left[ Z, V\right]_{\mathfrak{m}}, Y\right\rangle + \left\langle \left[ Z, Y\right]_{\mathfrak{m}}, V\right\rangle \Bigg\} \Bigg\{ \dfrac{\left\langle X, Y\right\rangle \left\langle X, U\right\rangle }{\sqrt{\left\langle Y, Y\right\rangle }} - 4 \left\langle X, U\right\rangle  - \dfrac{\left\langle X, Y\right\rangle^2 \left\langle U, Y\right\rangle  }{\left\langle Y, Y\right\rangle^{3/2}} + \dfrac{4 \left\langle X, Y\right\rangle \left\langle U, Y\right\rangle}{\left\langle Y, Y\right\rangle } \Bigg\}\Bigg].\\
	&= \dfrac{\left\langle X, Y\right\rangle^3}{\left( \left\langle  X, Y\right\rangle - \sqrt{ \left\langle Y , Y\right\rangle } \right) ^4} \Big[ 0 +0 +0\Big], \text{because} \  (G/H, \left\langle \ , \ \right\rangle ) \ \text{is naturally reductive}.
	\end{split}
\end{equation*}
Hence, $\left( G/H , F\right) $ is naturally reductive.
\end{proof}
Next, we prove the similar result for homogeneous Finsler space with exponential metric.

\begin{theorem}
	Let $\left( G/H , F\right) $ be a homogeneous Finsler space of Berwald type	with exponential metric $F= \alpha  e^{\beta/\alpha}$ defined by an invariant Riemannian metric $\left\langle \ , \ \right\rangle $ and an invariant vector field $\tilde{X}$ such that $\tilde{X}(H)= X.$ Then $\left( G/H , F\right) $ is naturally reductive if and only if the underlying Riemannian space $\left( G/H , \left\langle \ , \ \right\rangle  \right)$ is naturally reductive.
\end{theorem}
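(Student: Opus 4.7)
The plan is to mirror, step by step, the argument used in the preceding theorem, substituting the fundamental tensor formula (\ref{flaginfeq1}) by its exponential counterpart (\ref{flagexpeq1}). Since $(G/H,F)$ is Berwaldian by hypothesis, the Chern connection of $F$ coincides with the Levi-Civita connection of $\tilde{a}$, so algebraic identities translate freely between $g_{Y}(\cdot,\cdot)$ and $\left\langle \cdot,\cdot\right\rangle$, and the Finslerian geodesic relation forces a scalar constraint on $g_Y(Y,[Y,Z]_{\mathfrak{m}})$ that will be exploited below.

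For the ``only if'' direction, assume $(G/H,F)$ is naturally reductive in the Latifi sense. Plugging $X=Y$ in the defining identity (and using $[Y,Y]=0$) yields $g_Y([Y,U]_{\mathfrak{m}},V)+g_Y([Y,V]_{\mathfrak{m}},U)=0$, and specializing $V=Y$ forces $g_Y([Y,U]_{\mathfrak{m}},Y)=0$. I would then expand $g_Y(Y,[Y,Z]_{\mathfrak{m}})$ via (\ref{flagexpeq1}), and together with the Berwald coincidence of connections compare the two independent scalar coefficients multiplying $\left\langle X,[Y,Z]_{\mathfrak{m}}\right\rangle$ and $\left\langle [Y,Z]_{\mathfrak{m}},Y\right\rangle$; these must vanish separately, yielding the exponential analogues of (\ref{natinfeq3}) and (\ref{natinfeq6}). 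Substituting both vanishings back into the expansion of $g_Y([Y,U]_{\mathfrak{m}},V)$ collapses it to a strictly positive scalar multiple of $\left\langle [Y,U]_{\mathfrak{m}},V\right\rangle$, at which point the Latifi identity transfers directly to the Riemannian natural reductivity condition $\left\langle [Y,U]_{\mathfrak{m}},V\right\rangle+\left\langle [Y,V]_{\mathfrak{m}},U\right\rangle=0$.

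For the converse, assume $(G/H,\left\langle\cdot,\cdot\right\rangle)$ is naturally reductive, and compute separately $g_Y([Z,U]_{\mathfrak{m}},V)$, $g_Y([Z,V]_{\mathfrak{m}},U)$, and the Cartan contribution $2C_Y([Z,Y]_{\mathfrak{m}},U,V)$ using (\ref{flagexpeq1}) and the definition $C_Y(A,B,C)=\tfrac{1}{2}\tfrac{d}{dt}g_{Y+tC}(A,B)\big|_{t=0}$. Summing the three, the terms will organize themselves into three blocks weighted respectively by the symmetric combinations $\left\langle [Z,U]_{\mathfrak{m}},V\right\rangle+\left\langle [Z,V]_{\mathfrak{m}},U\right\rangle$, $\left\langle [Z,U]_{\mathfrak{m}},Y\right\rangle+\left\langle [Z,Y]_{\mathfrak{m}},U\right\rangle$, and $\left\langle [Z,V]_{\mathfrak{m}},Y\right\rangle+\left\langle [Z,Y]_{\mathfrak{m}},V\right\rangle$, each of which vanishes by the Riemannian hypothesis, so that the Latifi identity holds for $F$.

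The main obstacle is the Cartan-tensor calculation: differentiating the exponential prefactor $e^{2\left\langle X,Y\right\rangle/\sqrt{\left\langle Y,Y\right\rangle}}$ in (\ref{flagexpeq1}) along the $V$-direction in $Y$ produces a chain-rule contribution that must be combined cleanly with the $t$-derivative of the bracketed polynomial factor, and one must verify that all cross terms not proportional to the three symmetric brackets above either cancel or carry a vanishing scalar coefficient. Fortunately, the $U,V,Y,Z$-algebraic skeleton of the resulting expressions is structurally identical to the infinite-series case, differing only in the transcendental prefactor, so once the bookkeeping is done the final regrouping proceeds in exact parallel with the previous theorem.
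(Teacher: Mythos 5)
Your proposal follows essentially the same route as the paper's own proof: both directions use the Berwald hypothesis to extract $\left\langle X,[Y,Z]_{\mathfrak{m}}\right\rangle=0$, specialize the Latifi identity at $Z=Y$ to obtain $\left\langle [Y,Z]_{\mathfrak{m}},Y\right\rangle=0$, collapse $g_{_{Y}}([Y,U]_{\mathfrak{m}},V)$ to a nonzero scalar multiple of $\left\langle [Y,U]_{\mathfrak{m}},V\right\rangle$, and for the converse regroup the sum of the two metric terms and the Cartan term into the three symmetric brackets that vanish under Riemannian natural reductivity. The plan, including the bookkeeping concern about differentiating the exponential prefactor, matches the paper's computation, so no further comment is needed.
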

\begin{proof} Let $Y \neq 0,\  Z \in \mathfrak{m}.$
	From equation (\ref{flagexpeq1}), we can write
	\begin{equation}{\label{natexpeq2}}
	\begin{split}
	g_{_{Y}}(Y, \left[ Y, Z\right]_{\mathfrak{m}} )=& e^{2\left\langle  X, Y\right\rangle / \sqrt{ \left\langle Y , Y\right\rangle }}  \Bigg[  \left\langle Y, \left[ Y, Z\right]_{\mathfrak{m}}\right\rangle + 2 \left\langle X, Y \right\rangle  \left\langle X, \left[ Y, Z\right]_{\mathfrak{m}}\right\rangle - \dfrac{\left\langle X, Y\right\rangle \left\langle Y, Y\right\rangle \left\langle Y, \left[ Y, Z\right]_{\mathfrak{m}}\right\rangle   }{\left\langle Y, Y\right\rangle^{3/2} } \\
	&   + \dfrac{1}{\sqrt{ \left\langle Y , Y\right\rangle }}\bigg\{ \left\langle X, Y \right\rangle \left\langle Y, \left[ Y, Z\right]_{\mathfrak{m}}\right\rangle + \left\langle X, \left[ Y, Z\right]_{\mathfrak{m}} \right\rangle \left\langle Y, Y\right\rangle - \left\langle X, Y\right\rangle \left\langle Y, \left[ Y, Z\right]_{\mathfrak{m}} \right\rangle \bigg\} \\
	&  + \dfrac{2\left\langle X, Y\right\rangle }{\left\langle Y, Y \right\rangle } \left\lbrace \dfrac{\left\langle X, Y\right\rangle \left\langle Y, Y \right\rangle \left\langle Y, \left[ Y, Z\right]_{\mathfrak{m}}\right\rangle}{\left\langle Y, Y\right\rangle } - \left\langle Y, Y\right\rangle \left\langle X, \left[ Y, Z\right]_{\mathfrak{m}}\right\rangle - \left\langle X, Y\right\rangle \left\langle Y, \left[ Y, Z\right]_{\mathfrak{m}}\right\rangle  \right\rbrace \Bigg]\\
	= &   e^{2\left\langle  X, Y\right\rangle / \sqrt{ \left\langle Y , Y\right\rangle }}  \Bigg[ \left\langle Y, \left[ Y, Z\right]_{\mathfrak{m}}\right\rangle \left\lbrace 1 - \dfrac{\left\langle X, Y\right\rangle }{\sqrt{\left\langle Y, Y \right\rangle }}\right\rbrace + \left\langle X, \left[ Y, Z\right]_{\mathfrak{m}} \right\rangle \sqrt{\left\langle Y, Y\right\rangle } \Bigg].
	\end{split}
	\end{equation}
	Since $(G/H, F)$ is of Berwald type, therefore Chern connection of $\left( G/H , F\right) $ coincides with the Riemannian connection of  $\left( G/H , \left\langle \ , \ \right\rangle  \right)$.\\
	Thus from equation (\ref{natexpeq2}), we have
	\begin{equation}{\label{natexpeq3}}
	\left\langle X, \left[ Y, Z\right]_{\mathfrak{m}}\right\rangle =0, \ \forall \ Z \in \mathfrak{m}.
	\end{equation}
	Now,	let $\left( G/H , F\right) $ be naturally reductive, i.e.,  in the sense of (\cite{Latifi2007geo}), we have
	$$ g_{_{Y}}\big( \left[ Z, U\right]_{\mathfrak{m}}, V \big) \  + \ g_{_{Y}}\big( \left[ Z, V\right]_{\mathfrak{m}}, U \big) \  + \ 2 \  C_{_{Y}}\big( \left[ Z, Y\right]_{\mathfrak{m}}, U, V\big) =0, \ \forall \ Y \neq 0,\  U,\  V, \ Z \in \mathfrak{m}.$$
	Therefore, we can write
	$$ g_{_{Y}}\big( \left[ Y, U\right]_{\mathfrak{m}}, V \big) \  + \ g_{_{Y}}\big( \left[ Y, V\right]_{\mathfrak{m}}, U \big) \  + \ 2 \  C_{_{Y}}\big( \left[ Y, Y\right]_{\mathfrak{m}}, U, V\big) =0, \ \forall \ Y \neq 0,$$
	i.e.,
	\begin{equation}{\label{natexpeq4}}
	g_{_{Y}}\big( \left[ Y, U\right]_{\mathfrak{m}}, V \big) \  + \ g_{_{Y}}\big( \left[ Y, V\right]_{\mathfrak{m}}, U \big)=0.
	\end{equation}
	From above equation, we have
	\begin{equation}{\label{natexpeq5}}
	g_{_{Y}}\big( \left[ Y,  Z\right]_{\mathfrak{m}}, Y \big) = 0.
	\end{equation}
	From equation (\ref{natexpeq2}), (\ref{natexpeq3}) and (\ref{natexpeq5}), we get
	\begin{equation}{\label{natexpeq6}}
	\left\langle \left[ Y,  Z\right]_{\mathfrak{m}}, Y \right\rangle = 0.
	\end{equation}
	From equation (\ref{flagexpeq1}), we can write
	\begin{equation*}
	\begin{split}
	g_{_{Y}}(\left[ Y, U\right]_{\mathfrak{m}} , V)=& e^{2\left\langle  X, Y\right\rangle / \sqrt{ \left\langle Y , Y\right\rangle }}  \Bigg[  \left\langle \left[ Y, U\right]_{\mathfrak{m}}, V\right\rangle + 2 \left\langle X, \left[ Y, U\right]_{\mathfrak{m}} \right\rangle  \left\langle X, V\right\rangle - \dfrac{\left\langle X, Y\right\rangle \left\langle Y, \left[ Y, U\right]_{\mathfrak{m}}\right\rangle \left\langle Y, V\right\rangle   }{\left\langle Y, Y\right\rangle^{3/2} } \\
	&   + \dfrac{1}{\sqrt{ \left\langle Y , Y\right\rangle }}\bigg\{ \left\langle X, \left[ Y, U\right]_{\mathfrak{m}} \right\rangle \left\langle Y, V\right\rangle + \left\langle X, V \right\rangle \left\langle Y, \left[ Y, U\right]_{\mathfrak{m}}\right\rangle - \left\langle X, Y\right\rangle \left\langle \left[ Y, U\right]_{\mathfrak{m}}, V \right\rangle \bigg\} \\
	&  + \dfrac{2\left\langle X, Y\right\rangle }{\left\langle Y, Y \right\rangle } \left\lbrace \dfrac{\left\langle X, Y\right\rangle \left\langle Y, \left[ Y, U\right]_{\mathfrak{m}} \right\rangle \left\langle Y, V\right\rangle}{\left\langle Y, Y\right\rangle } - \left\langle Y, \left[ Y, U\right]_{\mathfrak{m}}\right\rangle \left\langle X, V\right\rangle - \left\langle X, \left[ Y, U\right]_{\mathfrak{m}} \right\rangle \left\langle Y, V\right\rangle  \right\rbrace \Bigg].
	\end{split}
	\end{equation*}
	Using equations (\ref{natexpeq3}) and (\ref{natexpeq6}) in above equation, we get
	
	$$ g_{_{Y}}(\left[ Y, U\right]_{\mathfrak{m}} , V) =  e^{2\left\langle  X, Y\right\rangle / \sqrt{ \left\langle Y , Y\right\rangle }} \left\lbrace 1 - \dfrac{\left\langle X, Y\right\rangle }{\sqrt{\left\langle Y, Y \right\rangle }} \right\rbrace \left\langle \left[ Y, U\right]_{\mathfrak{m}} , V \right\rangle  .  $$
	Similarly, we get
	$$ g_{_{Y}}(\left[ Y, V\right]_{\mathfrak{m}} , U) = e^{2\left\langle  X, Y\right\rangle / \sqrt{ \left\langle Y , Y\right\rangle }} \left\lbrace 1 - \dfrac{\left\langle X, Y\right\rangle }{\sqrt{\left\langle Y, Y \right\rangle }} \right\rbrace \left\langle \left[ Y, V\right]_{\mathfrak{m}} , U \right\rangle   .  $$
	Adding the above two equations, we get
	\begin{equation}{\label{natexpeq7}}
	g_{_{Y}}(\left[ Y, U\right]_{\mathfrak{m}} , V) +  g_{_{Y}}(\left[ Y, V\right]_{\mathfrak{m}} , U) =  e^{2\left\langle  X, Y\right\rangle / \sqrt{ \left\langle Y , Y\right\rangle }} \left\lbrace 1 - \dfrac{\left\langle X, Y\right\rangle }{\sqrt{\left\langle Y, Y \right\rangle }} \right\rbrace \Big\{ \left\langle \left[ Y, U\right]_{\mathfrak{m}}, V\right\rangle + \left\langle \left[ Y, V\right]_{\mathfrak{m}}, U\right\rangle \Big\} .
	\end{equation}
	From equations (\ref{natexpeq4}) and (\ref{natexpeq7}), we get
	$$ \left\langle \left[ Y, U\right]_{\mathfrak{m}}, V\right\rangle + \left\langle \left[ Y, V\right]_{\mathfrak{m}}, U\right\rangle =0.$$
	Hence, $ (G/H, \left\langle \ , \ \right\rangle )$ is naturally reductive.\\
	
	Conversely, let $ (G/H, \left\langle \ , \ \right\rangle )$ be naturally reductive.
	Using equation (\ref{flaginfeq1}), we can write
	\begin{equation}{\label{natexpeq8}}
	\begin{split}
	g_{_{Y}}(\left[ Z, U\right]_{\mathfrak{m}}, V)= & e^{2\left\langle  X, Y\right\rangle / \sqrt{ \left\langle Y , Y\right\rangle }}  \Bigg[  \left\langle \left[ Z, U\right]_{\mathfrak{m}}, V\right\rangle + 2 \left\langle X, \left[ Z, U\right]_{\mathfrak{m}} \right\rangle  \left\langle X, V\right\rangle - \dfrac{\left\langle X, Y\right\rangle \left\langle Y, \left[ Z, U\right]_{\mathfrak{m}}\right\rangle \left\langle Y, V\right\rangle   }{\left\langle Y, Y\right\rangle^{3/2} } \\
	&   + \dfrac{1}{\sqrt{ \left\langle Y , Y\right\rangle }}\bigg\{ \left\langle X, \left[ Z, U\right]_{\mathfrak{m}} \right\rangle \left\langle Y, V\right\rangle + \left\langle X, V \right\rangle \left\langle Y, \left[ Z, U\right]_{\mathfrak{m}}\right\rangle - \left\langle X, Y\right\rangle \left\langle \left[ Z, U\right]_{\mathfrak{m}}, V \right\rangle \bigg\} \\
	&  + \dfrac{2\left\langle X, Y\right\rangle }{\left\langle Y, Y \right\rangle } \left\lbrace \dfrac{\left\langle X, Y\right\rangle \left\langle Y, \left[ Z, U\right]_{\mathfrak{m}} \right\rangle \left\langle Y, V\right\rangle}{\left\langle Y, Y\right\rangle } - \left\langle Y, \left[ Z, U\right]_{\mathfrak{m}}\right\rangle \left\langle X, V\right\rangle - \left\langle X, \left[ Z, U\right]_{\mathfrak{m}} \right\rangle \left\langle Y, V\right\rangle  \right\rbrace \Bigg]\\
	= &  e^{2\left\langle  X, Y\right\rangle / \sqrt{ \left\langle Y , Y\right\rangle }}  \Bigg[  \left\langle \left[ Z, U\right]_{\mathfrak{m}}, V\right\rangle \left\lbrace 1 - \dfrac{\left\langle X, Y\right\rangle }{\sqrt{\left\langle Y, Y \right\rangle }}\right\rbrace  
	 + \left\langle \left[ Z, U\right]_{\mathfrak{m}}, Y\right\rangle \\
	 & \ \ \ \ \ \  \left\lbrace \dfrac{- \left\langle X, Y\right\rangle \left\langle Y, V\right\rangle}{\left\langle Y, Y\right\rangle^{3/2} } + \dfrac{\left\langle X, V \right\rangle }{\sqrt{\left\langle Y, Y\right\rangle }} + \dfrac{2 \left\langle X, Y \right\rangle^2 \left\langle Y, V\right\rangle}{\left\langle Y, Y\right\rangle^2  } - \dfrac{2\left\langle X, Y\right\rangle \left\langle X, V\right\rangle  }{\left\langle Y, Y\right\rangle }\right\rbrace \Bigg].
	\end{split}
	\end{equation}
	Similarly, we can write
	\begin{equation}{\label{natexpeq9}}
	\begin{split}
	g_{_{Y}}(\left[ Z, V\right]_{\mathfrak{m}} , U) = &  e^{2\left\langle  X, Y\right\rangle / \sqrt{ \left\langle Y , Y\right\rangle }}  \Bigg[  \left\langle \left[ Z, V\right]_{\mathfrak{m}}, U\right\rangle \left\lbrace 1 - \dfrac{\left\langle X, Y\right\rangle }{\sqrt{\left\langle Y, Y \right\rangle }}\right\rbrace  
	+ \left\langle \left[ Z, V\right]_{\mathfrak{m}}, Y\right\rangle \\
	& \ \ \ \ \ \  \left\lbrace \dfrac{- \left\langle X, Y\right\rangle \left\langle Y, U\right\rangle}{\left\langle Y, Y\right\rangle^{3/2} } + \dfrac{\left\langle X, U \right\rangle }{\sqrt{\left\langle Y, Y\right\rangle }} + \dfrac{2 \left\langle X, Y \right\rangle^2 \left\langle Y, U\right\rangle}{\left\langle Y, Y\right\rangle^2  } - \dfrac{2\left\langle X, Y\right\rangle \left\langle X, U\right\rangle  }{\left\langle Y, Y\right\rangle }\right\rbrace \Bigg].
	\end{split}
	\end{equation}
	Next, Cartan tensor is given by
	$$ C_{_{Y}}\left( Z, U, V\right)= \dfrac{1}{2}\dfrac{d}{dt}\Big[ g_{_{Y + t V}}\left( Z, U \right)\Big] \bigg|_{t=0}. $$
	After computations, we get
	\begin{equation}{\label{natexpeq10}}
	\begin{split}
	2  C_{_{Y}} & \left( \left[ Z, Y\right]_{\mathfrak{m}} , U,  V\right) = e^{2\left\langle  X, Y\right\rangle / \sqrt{ \left\langle Y , Y\right\rangle }} \times \\
	& \Bigg[ \left\langle \left[ Z, Y\right]_{\mathfrak{m}}, V\right\rangle 
	  \left\lbrace \dfrac{- \left\langle X, Y\right\rangle \left\langle Y, U\right\rangle}{\left\langle Y, Y\right\rangle^{3/2} } + \dfrac{\left\langle X, U \right\rangle }{\sqrt{\left\langle Y, Y\right\rangle }} + \dfrac{2 \left\langle X, Y \right\rangle^2 \left\langle Y, U\right\rangle}{\left\langle Y, Y\right\rangle^2  } - \dfrac{2\left\langle X, Y\right\rangle \left\langle X, U\right\rangle  }{\left\langle Y, Y\right\rangle }\right\rbrace \\
	& + \left\langle \left[ Z, Y\right]_{\mathfrak{m}}, U\right\rangle 
	  \left\lbrace \dfrac{- \left\langle X, Y\right\rangle \left\langle Y, V\right\rangle}{\left\langle Y, Y\right\rangle^{3/2} } + \dfrac{\left\langle X, V \right\rangle }{\sqrt{\left\langle Y, Y\right\rangle }} + \dfrac{2 \left\langle X, Y \right\rangle^2 \left\langle Y, V\right\rangle}{\left\langle Y, Y\right\rangle^2  } - \dfrac{2\left\langle X, Y\right\rangle \left\langle X, V\right\rangle  }{\left\langle Y, Y\right\rangle }\right\rbrace\Bigg]  .
	\end{split}
	\end{equation}
	Adding equations (\ref{natexpeq8}), (\ref{natexpeq9}) and (\ref{natexpeq10}), we get 
	\begin{equation*}
	\begin{split}
	& g_{_{Y}}(\left[ Z, U\right]_{\mathfrak{m}} , V) + g_{_{Y}}(\left[ Z, V\right]_{\mathfrak{m}} , U) + 2 C_{_{Y}}\left( \left[ Z, Y\right]_{\mathfrak{m}} , U, V\right)\\
	& =  e^{2\left\langle  X, Y\right\rangle / \sqrt{ \left\langle Y , Y\right\rangle }} \times \\
	& \Bigg[ \Bigg\{\left\langle \left[ Z, U\right]_{\mathfrak{m}}, V\right\rangle + \left\langle \left[ Z, V\right]_{\mathfrak{m}}, U\right\rangle \Bigg\} \Bigg\{1 - \dfrac{\left\langle X, Y\right\rangle }{\sqrt{\left\langle Y, Y \right\rangle }} \Bigg\} \\
	& + \Bigg\{\left\langle \left[ Z, U\right]_{\mathfrak{m}}, Y\right\rangle + \left\langle \left[ Z, Y\right]_{\mathfrak{m}}, U\right\rangle \Bigg\} \Bigg\{ \dfrac{- \left\langle X, Y\right\rangle \left\langle Y, V\right\rangle}{\left\langle Y, Y\right\rangle^{3/2} } + \dfrac{\left\langle X, V \right\rangle }{\sqrt{\left\langle Y, Y\right\rangle }} + \dfrac{2 \left\langle X, Y \right\rangle^2 \left\langle Y, V\right\rangle}{\left\langle Y, Y\right\rangle^2  } - \dfrac{2\left\langle X, Y\right\rangle \left\langle X, V\right\rangle  }{\left\langle Y, Y\right\rangle } \Bigg\}\\
	& + \Bigg\{\left\langle \left[ Z, V\right]_{\mathfrak{m}}, Y\right\rangle + \left\langle \left[ Z, Y\right]_{\mathfrak{m}}, V\right\rangle \Bigg\} \Bigg\{ \dfrac{- \left\langle X, Y\right\rangle \left\langle Y, U\right\rangle}{\left\langle Y, Y\right\rangle^{3/2} } + \dfrac{\left\langle X, U \right\rangle }{\sqrt{\left\langle Y, Y\right\rangle }} + \dfrac{2 \left\langle X, Y \right\rangle^2 \left\langle Y, U\right\rangle}{\left\langle Y, Y\right\rangle^2  } - \dfrac{2\left\langle X, Y\right\rangle \left\langle X, U\right\rangle  }{\left\langle Y, Y\right\rangle } \Bigg\}\Bigg].\\
	&= e^{2\left\langle  X, Y\right\rangle / \sqrt{ \left\langle Y , Y\right\rangle }} \Big[ 0 +0 +0\Big], \text{because} \  (G/H, \left\langle \ , \ \right\rangle ) \ \text{is naturally reductive}.
	\end{split}
	\end{equation*}
	Hence, $\left( G/H , F\right) $ is naturally reductive.
	
	\end{proof}

\section{Flag curvature of naturally reductive homogeneous space}
 In   (\cite{ParMoghnatred}),  authors find a formula for flag curvature of naturally reductive homogeneous $(\alpha, \beta)$-metric spaces in the sense of Deng and Hou.

Here, we derive  formula for flag curvature of naturally reductive homogeneous Finsler space with infinite series $(\alpha, \beta)$-metric in the sense of Deng and Hou. 
\begin{theorem}
		Let $G/H$  be naturally reductive homogeneous Finsler space with infinite series $(\alpha, \beta)$-metric $F= \dfrac{\beta^2}{\beta-\alpha} $, defined by an invariant Riemannian metric $\tilde{a}$ and  an invariant vector field $\tilde{X}$ on $G/H$, Assume that  $\tilde{X}_{H} = X.$ and  $(P, Y)$ be a flag in $T_{H}(G/H)$ such that $\{ U, Y\}$ is an orthonormal basis of $P$ with respect to $\left\langle \ , \ \right\rangle $. Then the flag curvature of the flag $(P, Y)$ is given by 
\begin{equation}{\label{flaginfeq8}}
\begin{split}
K(P, Y) &= \left( \dfrac{\left\langle X, Y\right\rangle -1 }{\left\langle X, Y \right\rangle }\right)^4  \times \\
&  \dfrac{\splitdfrac{\bigg[  \left\lbrace \dfrac{1}{4}\left\langle X, \big[ Y, \left[ U, Y\right]_{\mathfrak{m}} \big]_{\mathfrak{m}}\right\rangle  + \left\langle X, \left[ Y, \left[ U, Y\right]_{\mathfrak{h}} \right] \right\rangle \right\rbrace  \left\langle X, U\right\rangle \left\lbrace \left\langle X, Y\right\rangle^2 +2  \right\rbrace } {  +
		\left\lbrace \dfrac{1}{4}\left\langle U, \big[ Y, \left[ U, Y\right]_{\mathfrak{m}} \big]_{\mathfrak{m}}\right\rangle  + \left\langle U
		, \left[ Y, \left[ U, Y\right]_{\mathfrak{h}} \right] \right\rangle \right\rbrace \left\langle X, Y\right\rangle^2 \big\{ \left\langle X, Y\right\rangle -1 \big\}
		\bigg] }}{\bigg[ \left\langle X, Y\right\rangle^2 \big\{ \left\langle X, Y\right\rangle^3 +\left\langle X, Y\right\rangle^2 -5\left\langle X, Y \right\rangle + 3 \big \} + 2 \left\langle X, U\right\rangle ^2\big\{ \left\langle X, Y \right\rangle ^2  + 4 \left\langle X, Y \right\rangle -5 \bigg]   }.
\end{split}
\end{equation}	
\end{theorem}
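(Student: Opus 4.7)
The plan is to piggy-back on the earlier Theorem~3.1 (flag curvature of a general homogeneous Finsler space with infinite series metric), keep its denominator and overall factor intact, and only re-evaluate the two Riemann-curvature quantities $\langle X,R(U,Y)Y\rangle$ and $\langle U,R(U,Y)Y\rangle$ that appear in the numerator under the extra hypothesis of natural reductivity. The formula (\ref{flaginfeq8}) has exactly the same algebraic shape as (\ref{flagcurinfeq}); the only change is that the Püttmann expressions (\ref{flaginfeq5}) and (\ref{flaginfeq7}) are to be replaced by the two bracket expressions $\tfrac14\langle X,[Y,[U,Y]_{\mathfrak m}]_{\mathfrak m}\rangle + \langle X,[Y,[U,Y]_{\mathfrak h}]\rangle$ and $\tfrac14\langle U,[Y,[U,Y]_{\mathfrak m}]_{\mathfrak m}\rangle + \langle U,[Y,[U,Y]_{\mathfrak h}]\rangle$ respectively. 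So the proof reduces to justifying this replacement; all the Finslerian algebra of $g_Y$ that produced (\ref{flaginfeq3}) and (\ref{flaginfeq4}) is already available verbatim.

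First I would invoke Theorem~3.3: since $F=\beta^2/(\beta-\alpha)$ is assumed to define a naturally reductive homogeneous Finsler space, the underlying Riemannian space $(G/H,\langle\,,\,\rangle)$ is naturally reductive, and by the Berwald character of $F$ (built into Definition~3.2) the Chern curvature of $F$ equals the Riemannian curvature tensor $R$ of $\tilde a$. Hence I may use the classical Nomizu formula for the Riemann curvature of a naturally reductive homogeneous space: for $X,Y,Z\in\mathfrak m$,
\[
R(X,Y)Z=\tfrac14[X,[Y,Z]_{\mathfrak m}]_{\mathfrak m}-\tfrac14[Y,[X,Z]_{\mathfrak m}]_{\mathfrak m}-\tfrac12[[X,Y]_{\mathfrak m},Z]_{\mathfrak m}-[[X,Y]_{\mathfrak h},Z].
\]
Specialising to $X=U$, $Z=Y$, the first term drops out because $[Y,Y]=0$, and the remaining three collapse (using $[[U,Y]_{\mathfrak m},Y]_{\mathfrak m}=-[Y,[U,Y]_{\mathfrak m}]_{\mathfrak m}$ and likewise for the $\mathfrak h$-part) to
\[
R(U,Y)Y=\tfrac14\bigl[Y,[U,Y]_{\mathfrak m}\bigr]_{\mathfrak m}+\bigl[Y,[U,Y]_{\mathfrak h}\bigr].
\]
Pairing this identity with $X$ and with $U$ in the Ad$(H)$-invariant inner product $\langle\,,\,\rangle$ gives exactly the two bracket expressions that appear in the numerator of (\ref{flaginfeq8}).

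Finally I would substitute these two replacements into the identity
\[
K(P,Y)=\frac{g_Y(U,R(U,Y)Y)}{g_Y(Y,Y)g_Y(U,U)-g_Y^{\,2}(Y,U)},
\]
re-using the Finslerian computations leading from (\ref{flaginfeq1}) to (\ref{flaginfeq3}) and (\ref{flaginfeq4}) (which depend only on orthonormality of $\{U,Y\}$ and not on natural reductivity), and exploiting (\ref{flaginfeq6}), i.e.\ $\langle Y,R(U,Y)Y\rangle=0$, to kill the middle summand in (\ref{flaginfeq4}). The denominator is unchanged from (\ref{flagcurinfeq}), and the numerator is the one written in (\ref{flaginfeq8}).

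I do not foresee a genuinely hard step: once Theorem~3.3 lets us pass to the Riemannian Nomizu formula, everything is bookkeeping that runs in parallel with the proof of Theorem~3.1. The one place to be careful is ensuring that the $\mathfrak m$- and $\mathfrak h$-projections are kept straight when pairing $[Y,[U,Y]_{\mathfrak h}]$ against vectors in $\mathfrak m$, since the $\mathfrak h$-component of $[U,Y]$ need not vanish and its bracket with $Y$ can have non-zero $\mathfrak m$-part; using the $\mathrm{Ad}(H)$-invariance of $\langle\,,\,\rangle$ keeps the inner products well-defined without having to project.
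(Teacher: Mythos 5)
Your proposal is correct and follows essentially the same route as the paper: invoke the Kobayashi--Nomizu curvature formula for a naturally reductive homogeneous space to obtain $R(U,Y)Y=\tfrac14\big[Y,[U,Y]_{\mathfrak m}\big]_{\mathfrak m}+\big[Y,[U,Y]_{\mathfrak h}\big]$, then substitute into the flag-curvature formula (\ref{flagcurinfeq}) already established for the infinite series metric. The only cosmetic difference is that you derive this expression by specialising the general Nomizu formula, whereas the paper quotes the specialised form directly from Proposition 3.4 of Kobayashi--Nomizu.
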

\begin{proof}
Since $F$ is naturally reductive, using proposition 3.4 in (\cite{KN}) on page 202, we have 
	$$ R(U, Y)Y= \dfrac{1}{4}\big[ Y, \left[ U, Y\right]_{\mathfrak{m}} \big]_{\mathfrak{m}} + \left[ Y, \left[ U, Y\right]_{\mathfrak{h}} \right], \ \forall \  U, Y \in \mathfrak{m}.$$
	Substituting above value in equation (\ref{flagcurinfeq}), we get 
	\begin{equation*}
	\begin{split}
	K(P, Y) &= \left( \dfrac{\left\langle X, Y\right\rangle -1 }{\left\langle X, Y \right\rangle }\right)^4  \times \\
	& \ \ \ \dfrac{\splitdfrac{\bigg[  \left\langle X, \dfrac{1}{4}\big[ Y, \left[ U, Y\right]_{\mathfrak{m}} \big]_{\mathfrak{m}} + \left[ Y, \left[ U, Y\right]_{\mathfrak{h}} \right] \right\rangle \left\langle X, U\right\rangle \left\lbrace \left\langle X, Y\right\rangle^2 +2  \right\rbrace } {  +
		\left\langle U, \dfrac{1}{4}\big[ Y, \left[ U, Y\right]_{\mathfrak{m}} \big]_{\mathfrak{m}} + \left[ Y, \left[ U, Y\right]_{\mathfrak{h}} \right] \right\rangle \left\langle X, Y\right\rangle^2 \big\{ \left\langle X, Y\right\rangle -1 \big\}
		\bigg] }}{\bigg[ \left\langle X, Y\right\rangle^2 \big\{ \left\langle X, Y\right\rangle^3 +\left\langle X, Y\right\rangle^2 -5\left\langle X, Y \right\rangle + 3 \big \} + 2 \left\langle X, U\right\rangle ^2\big\{ \left\langle X, Y \right\rangle ^2  + 4 \left\langle X, Y \right\rangle -5 \bigg]   }.
	\end{split}
	\end{equation*}
	Simplifying the above equation,we get equation (\ref{flaginfeq8}).
\end{proof}
If  $ H= \{e\}$, then we have the following corollary:
\begin{corollary}
Let $F = \dfrac{\beta^2}{\beta-\alpha}$ be defined by a bi-invariant Riemannian metric $\tilde{a}$  on a Lie group $G$ and  a left invariant vector field $X$  on $G$ such that the Chern connection of $F$ coincides with the Riemannian connection of $\tilde{a}$.   Assume that 	
	 $(P, Y)$ be a flag in $T_{e}(G)$ such that $\{ U, Y\}$ is an orthonormal basis of $P$ with respect to $\left\langle \ , \ \right\rangle $. Then the flag curvature of the flag $(P, Y)$ is given by
	 \begin{equation}{\label{flaginfeq9}}
	 \begin{split}
	 K(P, Y) &= \left( \dfrac{\left\langle X, Y\right\rangle -1 }{\left\langle X, Y \right\rangle }\right)^4  \times \\
	 & \ \ \ \dfrac{\bigg[  \left\langle X, \big[ Y, \left[ U, Y\right] \big]  \right\rangle \left\langle X, U\right\rangle \left\lbrace \left\langle X, Y\right\rangle^2 +2  \right\rbrace    +
	 	\left\langle U, \big[ Y, \left[ U, Y\right] \big] \right\rangle \left\langle X, Y\right\rangle^2 \big\{ \left\langle X, Y\right\rangle -1 \big\}
	 	\bigg] }{4\bigg[ \left\langle X, Y\right\rangle^2 \big\{ \left\langle X, Y\right\rangle^3 +\left\langle X, Y\right\rangle^2 -5\left\langle X, Y \right\rangle + 3 \big \} + 2 \left\langle X, U\right\rangle ^2\big\{ \left\langle X, Y \right\rangle ^2  + 4 \left\langle X, Y \right\rangle -5 \bigg]   }.
	 \end{split}
	 \end{equation}
\end{corollary}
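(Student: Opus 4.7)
My plan is to specialize the preceding theorem to the case $H=\{e\}$ and observe that the curvature formula collapses dramatically. The corollary is advertised as a direct consequence of that theorem, so essentially no new geometric content is required; the work is bookkeeping.

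First, I would note that when $H=\{e\}$ the reductive decomposition $\mathfrak{g}=\mathfrak{m}+\mathfrak{h}$ degenerates to $\mathfrak{h}=0$ and $\mathfrak{m}=\mathfrak{g}$. Consequently, for any $U,Y\in\mathfrak{g}$ one has $[U,Y]_{\mathfrak{m}}=[U,Y]$ and $[U,Y]_{\mathfrak{h}}=0$. The bi-invariance of $\tilde{a}$ is precisely the statement $\langle[X,Y],Z\rangle+\langle Y,[X,Z]\rangle=0$ for all $X,Y,Z\in\mathfrak{g}$, which is exactly the naturally reductive condition in the $H=\{e\}$ case, as recorded in the definition. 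Together with the explicit Berwald hypothesis that the Chern connection of $F$ agrees with the Riemannian connection of $\tilde{a}$, this places us squarely within the hypotheses of the preceding theorem; therefore equation~(\ref{flaginfeq8}) applies verbatim.

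Next, I would substitute $[U,Y]_{\mathfrak{h}}=0$ and $[U,Y]_{\mathfrak{m}}=[U,Y]$ into the numerator of~(\ref{flaginfeq8}). The subexpression $\bigl[Y,[U,Y]_{\mathfrak{h}}\bigr]=[Y,0]=0$ vanishes inside both bracketed summands of the numerator, while each projection $[\,\cdot\,]_{\mathfrak{m}}$ simply drops off. What remains in the numerator is a common factor $\tfrac{1}{4}$ multiplying the rest. Pulling this $\tfrac14$ out of the numerator and absorbing it into the denominator turns the outer bracket in the denominator of~(\ref{flaginfeq8}) into four times itself, which is exactly the shape appearing in equation~(\ref{flaginfeq9}). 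A brief simplification then yields the claimed formula.

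There is essentially no technical obstacle here; the only mild points worth verifying carefully are (i) that the $H=\{e\}$ reduction of the naturally reductive hypothesis is genuinely supplied by bi-invariance of $\tilde{a}$, and (ii) that the proposition from Kobayashi--Nomizu invoked in the preceding theorem gives $R(U,Y)Y=\tfrac14[Y,[U,Y]]$ (rather than the full two-term expression) once $\mathfrak{h}=0$. Both checks are immediate, so the specialization goes through cleanly and produces~(\ref{flaginfeq9}).
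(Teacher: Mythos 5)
Your proposal is correct and follows essentially the same route as the paper: both amount to substituting the bi-invariant curvature tensor $R(U,Y)Y=\tfrac{1}{4}\bigl[Y,[U,Y]\bigr]$ into the flag-curvature formula and extracting the common factor $\tfrac{1}{4}$ into the denominator, which is where the $4$ in (\ref{flaginfeq9}) comes from. The only cosmetic difference is that you specialize equation (\ref{flaginfeq8}) by setting $\mathfrak{h}=0$, whereas the paper substitutes directly into equation (\ref{flagcurinfeq}); since (\ref{flaginfeq8}) was itself obtained from (\ref{flagcurinfeq}) by that very substitution, the two computations coincide.
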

\begin{proof}
	
	Since $ \left\langle \ , \ \right\rangle $ is bi-invariant, we have 
	$$ R(U, Y)Y= \dfrac{1}{4}\big[ Y, \left[ U, Y\right] \big].$$
		Substituting above value in equation (\ref{flagcurinfeq}), we get 
	\begin{equation*}
	\begin{split}
	K(P, Y) &= \left( \dfrac{\left\langle X, Y\right\rangle -1 }{\left\langle X, Y \right\rangle }\right)^4  \times \\
	& \ \ \ \dfrac{\bigg[  \left\langle X, \dfrac{1}{4}\big[ Y, \left[ U, Y\right] \big]  \right\rangle \left\langle X, U\right\rangle \left\lbrace \left\langle X, Y\right\rangle^2 +2  \right\rbrace    +
			\left\langle U, \dfrac{1}{4}\big[ Y, \left[ U, Y\right] \big] \right\rangle \left\langle X, Y\right\rangle^2 \big\{ \left\langle X, Y\right\rangle -1 \big\}
			\bigg] }{\bigg[ \left\langle X, Y\right\rangle^2 \big\{ \left\langle X, Y\right\rangle^3 +\left\langle X, Y\right\rangle^2 -5\left\langle X, Y \right\rangle + 3 \big \} + 2 \left\langle X, U\right\rangle ^2\big\{ \left\langle X, Y \right\rangle ^2  + 4 \left\langle X, Y \right\rangle -5 \bigg]   }.
	\end{split}
	\end{equation*}
	Simplifying the above equation, we get equation (\ref{flaginfeq9}).
\end{proof}

Next, we derive  formula for flag curvature of naturally reductive homogeneous Finsler space with exponential $(\alpha, \beta)$-metric in the sense of Deng and Hou. 
\begin{theorem}
	Let $G/H$  be naturally reductive homogeneous Finsler space with exponential $(\alpha, \beta)$-metric $F= \alpha  e^{\beta/\alpha}  $, defined by an invariant Riemannian metric $\tilde{a}$ and  an invariant vector field $\tilde{X}$ on $G/H$, Assume that  $\tilde{X}_{H} = X.$ and  $(P, Y)$ be a flag in $T_{H}(G/H)$ such that $\{ U, Y\}$ is an orthonormal basis of $P$ with respect to $\left\langle \ , \ \right\rangle $. Then the flag curvature of the flag $(P, Y)$ is given by 
	\begin{equation}{\label{flagexpeq8}}
		K(P, Y) = \dfrac{\splitdfrac{\Bigg[ \left\lbrace \dfrac{1}{2}\left\langle X, \big[ Y, \left[ U, Y\right]_{\mathfrak{m}} \big]_{\mathfrak{m}}\right\rangle  + 2 \left\langle X, \left[ Y, \left[ U, Y\right]_{\mathfrak{h}} \right] \right\rangle \right\rbrace  \left\langle X, U\right\rangle} {  +
			\left\lbrace \dfrac{1}{4}\left\langle U, \big[ Y, \left[ U, Y\right]_{\mathfrak{m}} \big]_{\mathfrak{m}}\right\rangle  + \left\langle U
			, \left[ Y, \left[ U, Y\right]_{\mathfrak{h}} \right] \right\rangle \right\rbrace \Big( 1- \left\langle X, Y\right\rangle \Big)	\Bigg] }}{e^{2\left\langle  X, Y\right\rangle}  \bigg[ \Big( 1- \left\langle X, Y\right\rangle \Big) \left( 1 + 4 \left\langle X, Y\right\rangle^2 \right)   + \left\langle X, U\right\rangle^2 \left( 1+ 8 \left\langle X, Y\right\rangle^2  \right)  \bigg]  }.
		\end{equation}	
\end{theorem}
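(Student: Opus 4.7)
The plan is to combine Theorem 3.2 of the paper, which already supplies the general formula (\ref{flagcurexpeq}) for the flag curvature of a homogeneous Finsler space equipped with the exponential $(\alpha,\beta)$-metric, with the classical Kobayashi--Nomizu description of the Riemannian curvature tensor on a naturally reductive homogeneous space. Since we are free to use any result stated earlier in the excerpt, the proof reduces to a single substitution followed by algebraic simplification.

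First I would observe that, because $G/H$ is naturally reductive with respect to $\tilde{a}$ and $F$ is of Berwald type (which is built into the setup, since $\tilde{X}$ is parallel with respect to $\tilde{a}$, so the Chern connection of $F$ coincides with the Levi-Civita connection of $\tilde{a}$), the Finsler and Riemannian curvature tensors agree. Therefore Proposition 3.4 on page 202 of \cite{KN} applies and gives
\[
R(U,Y)Y \;=\; \dfrac{1}{4}\bigl[Y,\,[U,Y]_{\mathfrak{m}}\bigr]_{\mathfrak{m}} \;+\; \bigl[Y,\,[U,Y]_{\mathfrak{h}}\bigr], \qquad \forall\,U,Y\in\mathfrak{m}.
\]

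Next I would substitute this explicit expression into the two scalar quantities $\langle X,R(U,Y)Y\rangle$ and $\langle U,R(U,Y)Y\rangle$ that appear in formula (\ref{flagcurexpeq}); by linearity of $\langle\,\cdot\,,\,\cdot\,\rangle$ in its second argument, each of them splits cleanly into the sum of a $\tfrac14$-weighted $\mathfrak{m}$-projection term and an $\mathfrak{h}$-component term. The denominator of (\ref{flagcurexpeq}) involves only the raw scalars $\langle X,Y\rangle$, $\langle X,U\rangle$ and the exponential prefactor, so it is carried over verbatim. In the numerator, the factor $2\langle X,U\rangle$ multiplying $\langle X, R(U,Y)Y\rangle$ converts the $\tfrac14$ and the $1$ to $\tfrac12$ and $2$, respectively, while the factor $(1-\langle X,Y\rangle)$ multiplying $\langle U,R(U,Y)Y\rangle$ leaves the $\tfrac14$ and the $1$ unchanged. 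Collecting the pieces yields precisely equation (\ref{flagexpeq8}).

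There is really no conceptual obstacle, only a bookkeeping one: one must faithfully track the two distinct coefficients ($\tfrac14$ versus $1$) produced by the $\mathfrak{m}$- and $\mathfrak{h}$-projections through the linear substitution and then correctly multiply them by the two different prefactors ($2\langle X,U\rangle$ and $(1-\langle X,Y\rangle)$) that appear in (\ref{flagcurexpeq}). This is also the only place where a sign or coefficient error could creep in, so the final check is simply to verify that the numerator of (\ref{flagexpeq8}) recovers the numerator of (\ref{flagcurexpeq}) when one re-expands the bracket expressions back into $R(U,Y)Y$. The corollary analogous to the one given after the infinite-series case is then immediate by specializing $H=\{e\}$, in which case $[U,Y]_{\mathfrak{h}}=0$ and $[U,Y]_{\mathfrak{m}}=[U,Y]$, eliminating the $\mathfrak{h}$-terms altogether.
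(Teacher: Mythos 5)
Your proposal is correct and follows essentially the same route as the paper: substitute the Kobayashi--Nomizu expression $R(U,Y)Y=\tfrac{1}{4}\bigl[Y,[U,Y]_{\mathfrak{m}}\bigr]_{\mathfrak{m}}+\bigl[Y,[U,Y]_{\mathfrak{h}}\bigr]$ into formula (\ref{flagcurexpeq}) and simplify by linearity, with the coefficient bookkeeping ($\tfrac12$ and $2$ from the factor $2\langle X,U\rangle$; $\tfrac14$ and $1$ preserved under the factor $1-\langle X,Y\rangle$) matching the paper's result exactly.
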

\begin{proof}
	Since $F$ is naturally reductive, using proposition 3.4 in (\cite{KN}) on page 202, we have 
	$$ R(U, Y)Y= \dfrac{1}{4}\big[ Y, \left[ U, Y\right]_{\mathfrak{m}} \big]_{\mathfrak{m}} + \left[ Y, \left[ U, Y\right]_{\mathfrak{h}} \right], \ \forall \  U, Y \in \mathfrak{m}.$$
	Substituting above value in equation (\ref{flagcurexpeq}), we get 
	\begin{equation*}
		K(P, Y) = \dfrac{ \splitdfrac{\Bigg[  2 \left\langle X, \dfrac{1}{4}\big[ Y, \left[ U, Y\right]_{\mathfrak{m}} \big]_{\mathfrak{m}} + \left[ Y, \left[ U, Y\right]_{\mathfrak{h}} \right]\right\rangle \left\langle X, U \right\rangle}{ + \left\langle U, \dfrac{1}{4}\big[ Y, \left[ U, Y\right]_{\mathfrak{m}} \big]_{\mathfrak{m}} + \left[ Y, \left[ U, Y\right]_{\mathfrak{h}} \right]\right\rangle \Big( 1- \left\langle X, Y\right\rangle \Big) \Bigg]}}{e^{2\left\langle  X, Y\right\rangle}  \bigg[ \Big( 1- \left\langle X, Y\right\rangle \Big) \left( 1 + 4 \left\langle X, Y\right\rangle^2 \right)   + \left\langle X, U\right\rangle^2 \left( 1+ 8 \left\langle X, Y\right\rangle^2  \right)  \bigg]}.
		\end{equation*}
	Simplifying the above equation, we get equation (\ref{flagexpeq8}).
\end{proof}
If  $ H= \{e\}$, then we have the following corollary:
\begin{corollary}
	Let $F=\alpha  e^{\beta/\alpha} $ be defined by a bi-invariant Riemannian metric $\tilde{a}$  on a Lie group $G$ and  a left invariant vector field $X$  on $G$ such that the Chern connection of $F$ coincides with the Riemannian connection of $\tilde{a}$.   Assume that 	
	$(P, Y)$ be a flag in $T_{e}(G)$ such that $\{ U, Y\}$ is an orthonormal basis of $P$ with respect to $\left\langle \ , \ \right\rangle $. Then the flag curvature of the flag $(P, Y)$ is given by
	\begin{equation}{\label{flagexpeq9}}
	K(P, Y) = \dfrac{\bigg[  2\left\langle X, \big[ Y, \left[ U, Y\right]_{\mathfrak{m}} \big]_{\mathfrak{m}}\right\rangle    \left\langle X, U\right\rangle  +
			 \left\langle U, \big[ Y, \left[ U, Y\right]_{\mathfrak{m}} \big]_{\mathfrak{m}}\right\rangle   \Big( 1- \left\langle X, Y\right\rangle \Big)	\bigg] }{4 e^{2\left\langle  X, Y\right\rangle}  \bigg[ \Big( 1- \left\langle X, Y\right\rangle \Big) \left( 1 + 4 \left\langle X, Y\right\rangle^2 \right)   + \left\langle X, U\right\rangle^2 \left( 1+ 8 \left\langle X, Y\right\rangle^2  \right)  \bigg]  }.
	\end{equation}
\end{corollary}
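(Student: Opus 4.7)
The plan is to specialize the preceding theorem to the case where the subgroup $H$ is trivial and the underlying Riemannian metric $\tilde a$ is bi-invariant on the Lie group $G$. I would proceed in three short steps.

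First, I would reduce the ambient algebraic data. Taking $H=\{e\}$ forces $\mathfrak h=\{0\}$ and $\mathfrak m=\mathfrak g$. Consequently, for any $U,Y\in\mathfrak g$, the projections degenerate: $[U,Y]_{\mathfrak m}=[U,Y]$ and $[U,Y]_{\mathfrak h}=0$. Every $\text{Ad}(H)$-invariance condition collapses to a tautology, and the Berwald hypothesis is exactly the stated requirement that the Chern connection of $F$ coincides with the Riemannian connection of $\tilde a$.

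Next, I would identify the curvature tensor. Since bi-invariance is a particularly strong form of natural reductivity with $\mathfrak h=0$, Proposition 3.4 of Kobayashi-Nomizu (page 202), already invoked in the preceding theorem, specializes to the classical formula
\[
R(U,Y)Y=\tfrac{1}{4}\bigl[Y,[U,Y]\bigr].
\]
Because the Chern connection of $F$ agrees with the Levi-Civita connection of $\tilde a$, this is the same $R$ that appears on the right-hand side of formula (\ref{flagcurexpeq}) in the general homogeneous exponential-metric theorem.

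Finally, I would substitute and simplify. Inserting $R(U,Y)Y=\tfrac{1}{4}[Y,[U,Y]]$ into the numerator of (\ref{flagcurexpeq}) produces the two terms
\[
2\cdot\tfrac{1}{4}\bigl\langle X,[Y,[U,Y]]\bigr\rangle\langle X,U\rangle \;+\;\tfrac{1}{4}\bigl\langle U,[Y,[U,Y]]\bigr\rangle\bigl(1-\langle X,Y\rangle\bigr),
\]
and pulling the common factor $\tfrac{1}{4}$ into the denominator (thereby producing the $4\,e^{2\langle X,Y\rangle}$ prefactor) recovers expression (\ref{flagexpeq9}) exactly. There is no real obstacle here; the only point requiring care is verifying that the hypothesis ``Chern connection equals Riemannian connection'' transports cleanly from the homogeneous-space setting of the theorem to the Lie-group setting of the corollary, which it does because the parallelism of the left-invariant $\tilde X$ with respect to the bi-invariant $\tilde a$ is equivalent to $F$ being Berwaldian. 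Once this is recorded, the corollary drops out by direct substitution.
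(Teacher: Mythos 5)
Your proposal is correct and follows essentially the same route as the paper: specialize to $H=\{e\}$ so that $\mathfrak m=\mathfrak g$ and the projections trivialize, invoke the bi-invariant curvature formula $R(U,Y)Y=\tfrac14[Y,[U,Y]]$, substitute into the general flag-curvature formula (\ref{flagcurexpeq}), and absorb the factor $\tfrac14$ into the denominator to obtain the prefactor $4e^{2\langle X,Y\rangle}$. Your added remark that the Berwald hypothesis transports to the Lie-group setting via the parallelism of $\tilde X$ is a point the paper leaves implicit, but it does not change the argument.
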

\begin{proof}
	
	Since $ \left\langle \ , \ \right\rangle $ is bi-invariant, we have 
	$$ R(U, Y)Y= \dfrac{1}{4}\big[ Y, \left[ U, Y\right] \big].$$
	Substituting above value in equation (\ref{flagcurinfeq}), we get 
	\begin{equation*}
		K(P, Y) = \dfrac{\Bigg[  2 \left\langle X, \dfrac{1}{4}\big[ Y, \left[ U, Y\right]_{\mathfrak{m}} \big]_{\mathfrak{m}} \right\rangle \left\langle X, U \right\rangle+ \left\langle U, \dfrac{1}{4}\big[ Y, \left[ U, Y\right]_{\mathfrak{m}} \big]_{\mathfrak{m}} \right\rangle \Big( 1- \left\langle X, Y\right\rangle \Big) \Bigg]}{e^{2\left\langle  X, Y\right\rangle}  \bigg[ \Big( 1- \left\langle X, Y\right\rangle \Big) \left( 1 + 4 \left\langle X, Y\right\rangle^2 \right)   + \left\langle X, U\right\rangle^2 \left( 1+ 8 \left\langle X, Y\right\rangle^2  \right)  \bigg]}.
	\end{equation*}
	Simplifying the above equation, we get equation (\ref{flagexpeq9}).
\end{proof}

\end{document}